\documentclass[11pt]{article}

\usepackage[utf8]{inputenc}
\usepackage[T1]{fontenc}
\usepackage[USenglish]{babel}
\usepackage[a4paper,margin=32mm,headheight=14pt]{geometry}
\usepackage{lmodern}
\usepackage{microtype}
\usepackage{csquotes}

\usepackage{amsmath,amssymb,amscd}
\usepackage{amsthm}
\usepackage{mathtools}
\usepackage{mathrsfs}
\usepackage{enumitem}
\usepackage{aliascnt}
\usepackage{xcolor}
\usepackage{fancyhdr}
\usepackage[
  colorlinks=true,
  linkcolor=blue!45!black,
  citecolor=green!35!black,
  urlcolor=blue!55!black,
  pdfauthor={Alex Junior Gomez Saltachin},
  pdftitle={Exceptional collections for canonical stacks of log del Pezzo surfaces with 1/3(1,1) singularities},
  pdfsubject={Algebraic geometry},
  pdfkeywords={Log del Pezzo surfaces, canonical stacks, exceptional collections, semiorthogonal decompositions, quotient singularities, Brauer groups, special McKay correspondence}
]{hyperref}
\usepackage[capitalize,nameinlink]{cleveref}

\theoremstyle{plain}
\newtheorem{theorem}{Theorem}[section]
\newaliascnt{proposition}{theorem}
\newtheorem{proposition}[proposition]{Proposition}
\aliascntresetthe{proposition}
\newaliascnt{lemma}{theorem}
\newtheorem{lemma}[lemma]{Lemma}
\aliascntresetthe{lemma}
\newaliascnt{corollary}{theorem}
\newtheorem{corollary}[corollary]{Corollary}
\aliascntresetthe{corollary}

\theoremstyle{definition}
\newaliascnt{definition}{theorem}
\newtheorem{definition}[definition]{Definition}
\aliascntresetthe{definition}
\newaliascnt{example}{theorem}
\newtheorem{example}[example]{Example}
\aliascntresetthe{example}
\newaliascnt{construction}{theorem}

\aliascntresetthe{construction}
\theoremstyle{remark}
\newaliascnt{remark}{theorem}
\newtheorem{remark}[remark]{Remark}
\aliascntresetthe{remark}

\crefname{theorem}{Theorem}{Theorems}
\crefname{proposition}{Proposition}{Propositions}
\crefname{lemma}{Lemma}{Lemmas}
\crefname{corollary}{Corollary}{Corollaries}
\crefname{definition}{Definition}{Definitions}
\crefname{example}{Example}{Examples}
\crefname{construction}{Construction}{Constructions}
\crefname{remark}{Remark}{Remarks}

\newcommand{\A}{\mathbb A}
\newcommand{\Pp}{\mathbb P}
\newcommand{\F}{\mathbb F}
\newcommand{\C}{\mathbb C}
\newcommand{\Z}{\mathbb Z}
\newcommand{\Db}{D^b}
\newcommand{\coh}{\operatorname{coh}}

\newcommand{\Pic}{\operatorname{Pic}}
\newcommand{\rk}{\operatorname{rk}}
\newcommand{\Hom}{\operatorname{Hom}}
\newcommand{\Ext}{\operatorname{Ext}}
\newcommand{\Bl}{\operatorname{Bl}}
\newcommand{\GL}{\mathrm{GL}}
\newcommand{\Cl}{\mathrm{Cl}}
\newcommand{\Br}{\mathrm{Br}}
\newcommand{\Spec}{\mathrm{Spec}}

\newcommand{\RHom}{\operatorname{RHom}}
\newcommand{\XX}{\mathcal X}

\newcommand{\muthree}{\boldsymbol{\mu}_3}
\newcommand{\OO}{\mathcal O}

\usepackage[
  backend=biber,
  style=numeric,
  sorting=nyt,
  giveninits=true,
  doi=true,
  url=false,
  isbn=false
]{biblatex}
\begin{document}

\title{Exceptional collections for canonical stacks of log del Pezzo surfaces with \texorpdfstring{$\frac13(1,1)$}{1/3(1,1)} singularities}
\author{Alex Junior Gomez Saltachin\\[0.5em]
\small Departamento de Matem\'atica, Pontif\'icia Universidade Cat\'olica do Rio de Janeiro\\
\small Rua Marqu\^es de S\~ao Vicente, 225, Rio de Janeiro, RJ 22451-900, Brazil\\
\small \href{mailto:alex.gomez@pucp.edu.pe}{alex.gomez@pucp.edu.pe}}
\date{}

\maketitle

\begin{abstract}
We study derived categories associated with log del Pezzo surfaces whose singularities are of type $\frac13(1,1)$. The Ishii--Ueda special McKay correspondence and the rationality of the minimal resolution provide the structural mechanism for full exceptional collections on the canonical stack. We identify the residual-gerbe contribution, obtain an explicit length formula, and use the Corti--Heuberger cascades to give a uniform construction procedure once a full exceptional collection on the resolution of the rigid base is fixed. For the sporadic Johnson--Koll\'ar hypersurface $X_{10}\subset\mathbb P(1,2,3,5)$, we use the Reid--Suzuki realization as the anticanonical model of the blow-up of $\mathbb P(1,1,3)$ at eight general smooth points to construct an explicit full exceptional collection of length $13$. We compute its Euler form and obtain a semiorthogonal decomposition of the singular surface with a $K(3,1)$-component and ten exceptional objects via Karmazyn--Kuznetsov--Shinder descent. Finally, an appendix determines the Brauer group throughout the six Corti--Heuberger cascades: it is trivial in the cascades with $1\leq k\leq5$ and is isomorphic to $\mathbb Z/3$ in the six-point cascade.
\end{abstract}

\medskip
\noindent\textbf{2020 Mathematics Subject Classification.}
Primary 14F08; Secondary 14J45, 14A20, 14J17, 14E15, 14J26.

\smallskip
\noindent\textbf{Keywords.}
Log del Pezzo surfaces, canonical stacks, exceptional collections, semiorthogonal decompositions, quotient singularities, Brauer groups, special McKay correspondence.

\section{Introduction}

Let \(X\) be a log del Pezzo surface over \(\C\). Thus \(X\) is a normal projective surface with klt singularities and ample anticanonical divisor \(-K_X\). In dimension two, klt singularities are precisely quotient singularities. In this paper we focus on the case where all singularities are cyclic quotient singularities of type \(\frac13(1,1)\).

We study two related derived categories attached to \(X\). The first is the derived category of the canonical smooth Deligne--Mumford stack \(\pi:\XX\to X\). This stack is isomorphic to \(X\) over the smooth locus and records the stabilizer groups at the quotient singularities. The second is the derived category \(\Db(\coh X)\) of the singular surface itself. These categories reflect different geometries: \(\XX\) is smooth as a stack, whereas \(X\) is singular as a variety.

This distinction is natural for log del Pezzo surfaces in weighted projective spaces. Johnson--Kollár classified anticanonically embedded quasismooth log del Pezzo hypersurfaces in weighted projective \(3\)-spaces \cite{JohnsonKollar2001}. Later work on exceptional collections in this direction includes Elagin's construction for a family of non-toric log-terminal del Pezzo hypersurfaces, treated as smooth stacks \cite{Elagin2007}, and the work of Gugiatti--Rota on full exceptional collections for canonical stacks associated with the main Johnson--Kollár series \cite{GugiattiRota2023}.

The abstract existence of a full exceptional collection on \(\Db(\coh\XX)\) follows by combining the Ishii--Ueda special McKay correspondence with the rationality of the minimal resolution; see \cite[Theorem~1.4]{Ishii2015} and, for example, \cite[Remark~4.8]{GugiattiRota2023}. More precisely, Ishii--Ueda construct a fully faithful functor
\[
\Phi:\Db(\coh\widetilde X)\longrightarrow \Db(\coh\XX),
\]
and identify its semiorthogonal complement in terms of the non-special representations of the local stabilizer groups \cite[Proposition~1.1 and Theorem~1.2]{Ishii2015}. For a singular point \(p\) of type \(\frac13(1,1)\), the unique non-special representation of \(\muthree\) is \(\rho_2\); see \cite[Theorem~3.2]{Ishii2015} and \cite[Example~2.6]{GugiattiRota2023}. If \(\iota_p:B\muthree\hookrightarrow\XX\) is the residual gerbe over \(p\), we set
\[
\mathcal E_p:=\iota_{p*}\bigl(\OO_{B\muthree}\otimes\rho_2\bigr).
\]
For singularities of type \(\frac13(1,1)\), we make the local contribution explicit and compute the length of the resulting collection: each stacky point contributes the single exceptional object \(\mathcal E_p\) supported on its residual gerbe. This structural construction is the starting point for the more explicit geometric and categorical results below.

Our geometric tool is the Corti--Heuberger classification by log del Pezzo cascades \cite{Corti2016}. In this classification, the relevant surfaces are organized by sequences of blow-ups starting from \(\mathbb Q\)-Gorenstein rigid (\(qG\)-rigid) base surfaces. Once a full exceptional collection is fixed on the minimal resolution of a rigid base, Orlov's blow-up formula and the local Ishii--Ueda contribution give a uniform construction procedure along the corresponding cascade. Thus the cascades control both the birational geometry and the passage from the chosen base collection to the stack-theoretic collections.

Our first structural result is the following.

\begin{theorem}\label{thm:general-main}
Let \(X\) be a log del Pezzo surface whose singularities are \(p_1,\dots,p_r\), all of type \(\frac13(1,1)\). Let \(\XX\) be its canonical smooth Deligne--Mumford stack, and let \(\widetilde X\to X\) be the minimal resolution. Then \(\Db(\coh \XX)\) admits a full exceptional collection of length
\[
\ell=12-K_X^2+\frac{4r}{3}.
\]

More precisely, let
\[
\left\langle G_1,\dots,G_N\right\rangle
\]
be a full exceptional collection on \(\Db(\coh \widetilde X)\), where
\[
N=\rk K_0(\widetilde X)=12-K_X^2+\frac r3.
\]
If \(\Phi:\Db(\coh \widetilde X)\to \Db(\coh \XX)\) is the Ishii--Ueda fully faithful functor, then
\[
\left\langle
\mathcal E_{p_1},\dots,\mathcal E_{p_r},
\Phi(G_1),\dots,\Phi(G_N)
\right\rangle
\]
is a full exceptional collection on \(\Db(\coh \XX)\), where
\[
\mathcal E_{p_i}
=
\iota_{p_i*}\bigl(\OO_{B\muthree}\otimes \rho_2\bigr),
\qquad
\iota_{p_i}:B\muthree\hookrightarrow \XX
\]
is the inclusion of the residual gerbe over \(p_i\), and \(\rho_2\) is the unique non-special representation of \(\muthree\).
\end{theorem}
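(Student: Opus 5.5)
The argument has three parts. First we compute the rank of $K_0(\widetilde X)$ and use rationality of $\widetilde X$ to get a full exceptional collection of that length. Then we apply the Ishii--Ueda semiorthogonal decomposition. Finally we check that each local contribution is exceptional and that contributions from distinct points are orthogonal.

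\emph{Counting.} Each point $p_i$ of type $\frac13(1,1)$ is resolved by a single $(-3)$-curve $E_i$, with discrepancy $-\tfrac13$:
\[
K_{\widetilde X}=f^*K_X-\tfrac13\sum_i E_i .
\]
Squaring gives
\[
K_{\widetilde X}^2=K_X^2+\tfrac19\sum_i E_i^2=K_X^2-\tfrac r3 .
\]
The surface $\widetilde X$ is rational, because $X$ is a rational log del Pezzo surface (klt with $-K_X$ ample). Hence Noether's formula gives $\rho(\widetilde X)=10-K_{\widetilde X}^2$. Since $K_0(\widetilde X)$ is free of rank $\rho+2$, we get
\[
N=\rk K_0(\widetilde X)=12-K_{\widetilde X}^2=12-K_X^2+\tfrac r3 .
\]
Any smooth projective rational surface is obtained from $\Pp^2$ or a Hirzebruch surface by point blow-ups. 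So Orlov's blow-up formula, together with the Beilinson and Hirzebruch collections, produces a full exceptional collection $\langle G_1,\dots,G_N\rangle$ on $\Db(\coh\widetilde X)$.

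\emph{Ishii--Ueda.} By \cite[Proposition~1.1 and Theorem~1.2]{Ishii2015}, $\Phi$ is fully faithful, and there is a semiorthogonal decomposition
\[
\Db(\coh\XX)=\langle \mathcal A_{p_1},\dots,\mathcal A_{p_r},\Phi\Db(\coh\widetilde X)\rangle .
\]
Here $\mathcal A_{p}$ is generated by the objects $\iota_{p*}(\OO_{B\muthree}\otimes\rho)$, where $\rho$ runs over the non-special irreducible representations of the stabilizer. For $\frac13(1,1)$ the special representations are $\rho_0$ and $\rho_1$, so the only non-special one is $\rho_2$ \cite[Theorem~3.2]{Ishii2015}. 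Hence $\mathcal A_{p_i}=\langle\mathcal E_{p_i}\rangle$. The image of a full exceptional collection under a fully faithful functor is a full exceptional collection of $\Phi\Db(\coh\widetilde X)$. Concatenating the pieces gives the stated full collection, of length
\[
\ell=r+N=12-K_X^2+\tfrac{4r}3 .
\]

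\emph{Local contributions.} This is the step I expect to need the most care, since the precise form of the Ishii--Ueda statement (ordering, and whether the complement is generated by exceptional objects or only by some admissible category) must be matched. The objects $\mathcal E_{p_i}$ and $\mathcal E_{p_j}$ with $i\neq j$ have disjoint supports, so they are mutually orthogonal and their relative order is irrelevant. For exceptionality of a single $\mathcal E_p$, I would compute locally on $[\C^2/\muthree]$ with weights $(1,1)$. The Koszul resolution of the origin gives
\[
\Ext^k\bigl(\OO_0\otimes\rho,\OO_0\otimes\rho\bigr)=\bigl(\Lambda^k V^\vee\bigr)^{\muthree}, \qquad V=\rho_1^{\oplus2}.
\]
The characters of $\Lambda^k V^\vee$ are $\rho_0$ for $k=0$, $\rho_2^{\oplus2}$ for $k=1$, and $\rho_1$ for $k=2$. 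Only $k=0$ contributes invariants, so $\Ext^\bullet(\mathcal E_p,\mathcal E_p)=\C$ and $\mathcal E_p$ is exceptional. The Ishii--Ueda theorem, which I take as given, supplies the semiorthogonality between the $\mathcal E_{p_i}$ and the image of $\Phi$, and fullness follows from the decomposition.
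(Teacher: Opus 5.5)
Your proposal is correct and follows the paper's argument. The ingredients match: rationality of \(\widetilde X\) plus Orlov's blow-up formula for the collection on the resolution, the Ishii--Ueda decomposition with \(\rho_2\) as the unique non-special representation, disjoint supports for mutual orthogonality of the \(\mathcal E_{p_i}\), and the discrepancy/Noether count \(N=12-K_X^2+\frac r3\). The only differences are minor: you add a direct Koszul check that \(\mathcal E_p\) is exceptional (the paper takes this from the Ishii--Ueda statement), and you cite rationality of log del Pezzo surfaces instead of reproving it via Castelnuovo's criterion as the paper does.
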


The theorem is established in \cref{sec:canonical-stack} by combining the Ishii--Ueda decomposition with the geometry of the minimal resolution. Its explicit content is the identification of the local object and the length formula: the resolution contributes \(12-K_X^2+\frac r3\) objects, and the stack contributes one object at each singular point. The broader formal consequence for quotient singularities, together with the corresponding formula for singularities of type \(\frac1k(1,1)\), is recorded in \cref{sec:further-directions}.

We now turn to the family of degree-\(10\) hypersurfaces
\[
X_{10}=V(F_{10})\subset \Pp(1,2,3,5).
\]
A general quasismooth well-formed member has a unique singular point of the type considered above and belongs to the Corti--Heuberger family \(X_{1,1/3}\). By the \emph{one-point cascade} we mean the cascade containing surfaces with one \(\frac13(1,1)\)-point. Corti--Heuberger show that every member of \(X_{1,1/3}\) admits a presentation by eight blow-ups of nonsingular points of the \(qG\)-rigid base \(\Pp(1,1,3)\) \cite[Corollary~8(1)]{Corti2016}. The Reid--Suzuki construction realizes the general member as the degree-\(10\) anticanonical model of such a blow-up \cite[Section~1 and Table~1.1]{ReidSuzuki2004}. On minimal resolutions, this gives eight blow-ups of \(\F_3\), all away from its negative section.

This geometry permits an explicit treatment of \(X_{10}\): we display a full exceptional collection of length \(13\) on its canonical stack, compute its Euler form, and verify the compatibility and adherence conditions needed for Karmazyn--Kuznetsov--Shinder descent to the singular surface \cite{KKS2020}. These constitute the principal explicit results of the paper.

For a blow-up \(\beta_i:Y_i\to Y_{i-1}\) with exceptional curve \(D_i\), Orlov's blow-up formula contributes \(\OO_{D_i}(-1)\). We denote its image after pullback through the subsequent blow-ups by \(G_i\in\Db(\coh\widetilde X_{10})\), and retain the notation \(\mathcal E_p\) and \(\Phi\) introduced above.

\begin{theorem}\label{thm:X10-main}
Let \(X_{10}\subset \Pp(1,2,3,5)\) be a general member of the family of degree-\(10\) log del Pezzo hypersurfaces. Then its unique singularity \(p\) is of type \(\frac13(1,1)\), and \(X_{10}\) is the anticanonical model of the blow-up of \(\Pp(1,1,3)\) at eight general smooth points.

On minimal resolutions, this gives a sequence
\[
Y_8=\widetilde X_{10}
\xrightarrow{\beta_8}
Y_7
\to \cdots \to
Y_1
\xrightarrow{\beta_1}
Y_0=\F_3,
\]
where each \(\beta_i\) is the blow-up of a point away from the negative section \(C_0\subset \F_3\) and its successive strict transforms. Let \(f\) be the fiber class on \(\F_3\), and let \(\sigma:\widetilde X_{10}\to \F_3\) be the composition of the \(\beta_i\). Then \(\Db(\coh\mathcal X_{10})\) has the full exceptional collection of length \(13\) given by
\[
\left\langle
\mathcal E_p,
\Phi(G_8),\dots,\Phi(G_1),
\Phi\sigma^*\OO,
\Phi\sigma^*\OO(f),
\Phi\sigma^*\OO(C_0+3f),
\Phi\sigma^*\OO(C_0+4f)
\right\rangle.
\]

Moreover, the singular surface \(X_{10}\) admits a semiorthogonal decomposition
\[
\Db(\coh X_{10})
=
\left\langle
\Db(K(3,1)\text{-mod}),F_1,\dots,F_{10}
\right\rangle,
\]
where \(F_1,\dots,F_{10}\) are exceptional objects and
\[
K(3,1)\cong \C[z_1,z_2]/(z_1,z_2)^2.
\]
\end{theorem}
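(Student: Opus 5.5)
The proof has three parts: the geometric identification, the stack collection, and descent to $X_{10}$.

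\emph{Geometry.} For general $F_{10}$ in $\Pp(1,2,3,5)$, quasismoothness forces singularities only at coordinate vertices meeting $X_{10}$. The monomials $y^5$, $z^2 x\cdot$(lower terms), $t^2$ appear in $F_{10}$, so $X_{10}$ avoids the $\frac12$ and $\frac15$ vertices. Since $z^3 x$ has degree $10$ and no pure power of $z$ has degree $10$, $X_{10}$ passes through $P_z=(0:0:1:0)$. There the local coordinates are $y,t$ after eliminating $x$, giving type $\frac13(2,5)=\frac13(2,2)\cong\frac13(1,1)$. The orbifold adjunction gives $-K_{X_{10}}=\OO(1)$, so $K^2=10/30=1/3$. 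The realization as the anticanonical model of $\Bl_8\Pp(1,1,3)$ is quoted from \cite[Section~1, Table~1.1]{ReidSuzuki2004} and \cite[Corollary~8(1)]{Corti2016}. The minimal resolution of $\Pp(1,1,3)$ is $\F_3$, with $C_0$ the $(-3)$-curve over the vertex. Since the blown-up points are smooth points of $\Pp(1,1,3)$, on $\F_3$ they lie away from $C_0$, and $C_0$ keeps self-intersection $-3$.

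\emph{Stack collection.} On $\F_3$ I take the standard full strong collection $\langle\OO,\OO(f),\OO(C_0+3f),\OO(C_0+4f)\rangle$. It is obtained from $\Pp^1$-bundle data ($\pi^*$ of $\langle\OO,\OO(1)\rangle$ twisted by $\OO$ and $\OO(C_0+3f)=\OO_{\F_3}(1)$) via Orlov's projective bundle formula. Iterating Orlov's blow-up formula gives
\[
\Db(\widetilde X_{10})=\langle G_8,\dots,G_1,\sigma^*\OO,\sigma^*\OO(f),\sigma^*\OO(C_0+3f),\sigma^*\OO(C_0+4f)\rangle,
\]
which has $N=12$ objects. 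This matches $12-K^2+r/3=12-1/3+1/3$. Applying \cref{thm:general-main} with $r=1$ then yields the length-$13$ collection on $\XX_{10}$.

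\emph{Descent to $X_{10}$.} I would invoke KKS \cite{KKS2020}, which requires a full exceptional collection on $\widetilde X_{10}$ adapted to the exceptional curve $C_0$. The conditions are that the objects be compatible with $\OO_{C_0}$-type objects, i.e.\ that $\RHom$ from $\OO_{C_0}(-1)$ or the relevant twists vanishes, and that the collection be \emph{adherent}. The compatibility check uses $G_i$ supported away from $C_0$ and the intersection numbers $C_0\cdot f=1$, $C_0\cdot(C_0+3f)=0$. Concretely, I would reorganize the $\F_3$ part using mutations, with $\OO(C_0+3f)$ trivial on $C_0$, so that a block of two objects generates the part killed by $\pi_*$. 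KKS then produces the nonexceptional component $\Db(K(3,1)\text{-mod})$ for a $\frac13(1,1)$-point. The remaining $12-2=10$ objects descend to exceptional objects $F_1,\dots,F_{10}$ on $X_{10}$, giving the stated decomposition. The Euler form is computed from the Riemann--Roch pairing on $\widetilde X_{10}$ together with $\chi(\rho_2,\cdot)$ on $B\muthree$.

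The main obstacle is this last descent step: finding the right mutation of the $\F_3$ collection so that the KKS adherence hypothesis holds. My first attempt would be the reordering $\langle\OO(-C_0-3f)\text{-twist},\ldots\rangle$ modeled on the KKS example for $\Pp(1,1,3)$, transported unchanged through the blow-ups away from $C_0$.
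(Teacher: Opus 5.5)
Your geometric part and the length-$13$ stack collection follow the paper: the local analysis at $[0:0:1:0]$ (linear term from $xz^3$, residual weights $(2,5)\equiv(2,2)$), Reid--Suzuki and Corti--Heuberger for the eight blow-ups, Orlov's formula on $\F_3$, and then \cref{thm:general-main} with $r=1$.

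The genuine gap is the descent to $X_{10}$, which you explicitly leave open. Both ingredients you do state there are off.

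\emph{The compatibility condition.} The KKS condition is not a vanishing of $\RHom$ out of $\OO_{C_0}(-1)$. It requires that $\OO_E(-1)$ belong to one of the components, since it generates the kernel of the pushforward. For the $K(3,1)$ identification, that component must in addition be untwisted adherent to $E$.

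\emph{The proposed starting point.} Read literally, it fails. Twisting the standard collection by $\OO(-C_0-3f)$ gives the first block $\OO(-C_0-3f)\otimes\langle\OO,\OO(f)\rangle=\OO(-C_0-3f)\otimes\varpi^*\Db(\coh\Pp^1)$, where $\varpi:\F_3\to\Pp^1$ is the ruling. Now $\OO_{C_0}(-1)\otimes\OO(C_0+3f)\cong\OO_{C_0}(-1)$ is not of the form $\varpi^*M$, since it would have to equal $\varpi^*\varpi_*\OO_{C_0}(-1)=\OO(-f)$. Hence this block does not contain $\OO_{C_0}(-1)$; it contains $\OO_F(-1)$ for fibers $F$ instead.

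The missing idea is that no mutation search is needed. Take the KKS decomposition of $\F_3$ itself: $\langle\OO(-C_0-f),\OO(-f)\rangle$, $\langle\OO\rangle$, $\langle\OO(C_0+3f)\rangle$.
\begin{enumerate}[label=\textup{(\arabic*)}]
\item The sequence $0\to\OO(-C_0-f)\to\OO(-f)\to\OO_{C_0}(-1)\to0$ puts $\OO_{C_0}(-1)$ in the first block.
\item With $L_0=\OO(-C_0-f)$ and $L_1=\OO(-f)$, the relations $L_1=L_0(C_0)$ and $L_0\cdot C_0=2$ are the line-bundle conditions for untwisted adherence to a $(-3)$-curve.
\item Because $\sigma$ is an isomorphism near $C_0$, all of this pulls back: $\OO_E(-1)\cong\sigma^*\OO_{C_0}(-1)\in\sigma^*\langle L_0,L_1\rangle$, $\sigma^*L_1=\sigma^*L_0(E)$, and $\sigma^*L_0\cdot E=2$.
\item The Orlov pieces must go on the right, via the mutated blow-up formula $\Db(\coh S')=\langle\beta^*\Db(\coh S),\OO_D\rangle$. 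This comes from applying the inverse Serre functor to $\OO_D(-1)$, using $\omega_{S'}|_D\cong\OO_D(-1)$.
\end{enumerate}

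The ordering in (4) matters for the statement as written. Descending your left-handed order $\langle G_8,\dots,G_1,\sigma^*(\dots)\rangle$ would produce $\langle F_1,\dots,F_8,\Db(K(3,1)\text{-mod}),F_9,F_{10}\rangle$. Reordering that on the singular surface would need a separate mutation argument. With the right-handed order, \cite[Theorem~2.12 and Theorem~3.16]{KKS2020} give exactly $\langle\Db(K(3,1)\text{-mod}),F_1,\dots,F_{10}\rangle$. The ten $F_i$ descend from $\OO$, from $\OO(C_0+3f)$, and from the eight $\OO_{D_i}$.
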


The geometry and categorical constructions in \cref{thm:X10-main} are developed in \cref{sec:cascade}, including the exceptional collection, its Euler form, and the KKS descent to the singular surface. A second global contribution concerns the Brauer obstruction: the vanishing for \(X_{10}\), proved in \cref{sec:kks}, leads to the corresponding question for all six Corti--Heuberger cascades. Appendix~\ref{app:brauer} answers it uniformly.

\begin{theorem}[Brauer obstruction in the Corti--Heuberger cascades]
\label{thm:brauer-cascades-intro}
Let \(X\) belong to a Corti--Heuberger cascade with \(k\) singular points, all
of type \(\frac13(1,1)\). Then
\[
\Br(X)
\cong
\begin{cases}
0, & 1\leq k\leq 5,\\
\Z/3, & k=6.
\end{cases}
\]
In particular, among the six Corti--Heuberger cascades, the six-point cascade
is the unique one for which the Brauer group obstructs an untwisted adherent
decomposition in the sense of Karmazyn--Kuznetsov--Shinder.
\end{theorem}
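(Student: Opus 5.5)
The plan is to compute $\Br(X)$ for a rational surface with quotient singularities through the local class groups, using the standard exact sequence for a normal projective surface $X$ with rational singularities and minimal resolution $\widetilde X$. Since $\widetilde X$ is rational, $\Br(\widetilde X)=0$ and $H^1(\widetilde X,\OO)=H^2(\widetilde X,\OO)=0$. The Leray spectral sequence for $\widetilde X\to X$, or equivalently the description of $\Br$ via $H^2(X,\mathbb G_m)$ combined with the local class groups $\Cl(\OO_{X,p_i})\cong\Z/3$, then gives the exact sequence
\[
\Pic(\widetilde X)\xrightarrow{\ \alpha\ }\bigoplus_{i=1}^{k}\Cl(\OO_{X,p_i})\cong(\Z/3)^k\longrightarrow \Br(X)\longrightarrow 0 .
\]
Here $\alpha$ sends a divisor class to its local classes at the singular points. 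At a $\frac13(1,1)$ point with exceptional $(-3)$-curve $E_i$, this local class is $D\cdot E_i \bmod 3$. Hence
\[
\Br(X)\cong \operatorname{coker}\Bigl(\Pic(\widetilde X)\to(\Z/3)^k,\ D\mapsto (D\cdot E_i \bmod 3)_i\Bigr),
\]
which is also the cokernel of $\Cl(X)\to\bigoplus_i\Cl(\OO_{X,p_i})$. I would justify this step either by citing the standard result for rational surface singularities (as in the KKS setting) or by a direct Leray argument.

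Next I reduce to the rigid bases. Each blow-up in a cascade takes place at a smooth point away from the $E_i$. The exceptional curve of such a blow-up meets no $E_i$, and pulling back divisors preserves the intersection numbers with the $E_i$. So the image of $\alpha$ is unchanged along each cascade, and $\Br(X)$ depends only on the $qG$-rigid base surface of the cascade. This leaves six explicit computations, one for the minimal resolution of each Corti--Heuberger base with $k=1,\dots,6$ points of type $\frac13(1,1)$. For $k=1$ the base is $\Pp(1,1,3)$ with resolution $\F_3$, and the fiber $f$ satisfies $f\cdot C_0=1$, so $\alpha$ is surjective and $\Br=0$. For $2\le k\le 5$ I would use the explicit toric or blow-up models listed by Corti--Heuberger. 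In each case I would write down curves, such as fibers or exceptional $(-1)$-curves, whose intersection vectors with $(E_1,\dots,E_k)$ span $(\Z/3)^k$.

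The main obstacle is the six-point case, where the claim is that $\operatorname{im}\alpha$ has index $3$. The base is, up to the standard description, the quotient of a degree-$3$ situation, namely $\Pp^2/\muthree$ or a surface with a $\Z/3$ cover, in which the six $(-3)$-curves form a configuration carrying a $3$-divisible combination. My first attempt would be to find a class $L\in\Pic(\widetilde X)$ with $3L\sim\sum_i a_iE_i$ for some $a\in(\Z/3)^6$ not all zero. Such a class gives a linear functional on $(\Z/3)^6$, pairing with $a$, that vanishes on the image of $\alpha$, because $D\cdot\sum a_iE_i=3\,D\cdot L\equiv0$. This shows the cokernel is nonzero. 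For the matching upper bound I would check the index directly. Using unimodularity of $\Pic(\widetilde X)$, the cokernel is dual to $\bigl(\langle E_i\rangle^{\mathrm{sat}}/\langle E_i\rangle\bigr)\otimes\Z/3$, that is, to the discriminant-type quotient, and it suffices to show that this saturation has order exactly $3$. I would compute it from the explicit lattice $\langle E_1,\dots,E_6\rangle\subset\Pic(\widetilde X)$, using the discriminant $3^6$ of the $(-3)^{\oplus 6}$ lattice together with the rank and discriminant of the orthogonal complement. The same saturation criterion also re-proves the vanishing in the cases $k\le5$, since there the saturation is trivial. Finally, the statement about KKS obstruction follows because an untwisted adherent decomposition requires $\Br(X)=0$.
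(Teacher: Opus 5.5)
Your framework is the paper's. Your sequence has the same cokernel as Bright's map $\theta:\Pic(\widetilde X)\to\Lambda^\vee$, since $\theta(E_i)=-3e_i$ already lies in the image. Your invariance along cascade blow-ups is \cref{lem:brauer-cascade-invariance}, and the unimodularity duality between $\operatorname{coker}\theta$ and $\Lambda^{\mathrm{sat}}/\Lambda$ is what the paper uses for $k\le 2$.

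The gap is that the case computations, which are the substance of the theorem, are only promised, and your shortcut for them is circular. By your own duality, triviality of the saturation is equivalent to the vanishing you want, so it cannot be used to ``re-prove'' it. It is also not forced by numerics once $k\ge3$. Integrality of $L^2$ and $K_{\widetilde X}\cdot L$ makes $\Lambda$ primitive only for $k\le2$. For $3\le k\le 5$ the class $L=\frac13(E_a+E_b+E_c)$ has $L^2=-1$ and $K_{\widetilde X}\cdot L=1$, so it is numerically admissible, and excluding it needs the geometry of a specific model. The paper uses three models:
\begin{itemize}
\item the toric model of $X_{3,5}$, where $\theta(V_1)=e_1$, $\theta(V_2)=e_1+e_2$, $\theta(V_4)=e_3$;
\item the pentagon model of $X_{5,5/3}$, where $\theta(F_i)=e_i+e_{i+1}$ and $\det(I+P_5)=2$ is prime to $3$;
\item a $4$-gon representative of the four-point cascade, where the vectors $e_i+e_{i+1}$ span only a rank-three subgroup and an extra class $\pi^*Q_3$ with $\theta(\pi^*Q_3)=e_4$ is needed.
\end{itemize}
So even with a model in hand, ``take fibers or $(-1)$-curves'' need not suffice.

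For $k=6$ the same problem hits your upper bound. The classes $\frac13(E_1+E_2+E_3)$ and $\frac13(E_4+E_5+E_6)$ are each numerically admissible and mutually orthogonal. Together they generate an isotropic subgroup of order $9$ in $\Lambda^\vee/\Lambda$, all of whose elements satisfy the sign congruence. Hence neither the discriminant $3^6$ nor isotropy bounds the glue group by $3$. Your discriminant criterion is valid, but it requires knowing $\Lambda^\perp\cong\Pic(X)$ explicitly.

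Your proposed model is also wrong. A $\muthree$-action on $\Pp^2$ has at most three isolated fixed points, so no quotient $\Pp^2/\muthree$ has six $\frac13(1,1)$ points; your alternative, a $\Z/3$-cover, is the right picture. The base $X_{6,2}$ is toric with rays $(1,0),(-1,3),(-2,3),(-1,0),(1,-3),(2,-3)$. It is the quotient of the degree-six del Pezzo surface by a $\muthree$ in its torus. The paper obtains the lower bound from the principal divisor of $\chi^{(0,1)}$: it gives $3L=E_1-E_2+E_3-E_4+E_5-E_6$ with $L=-V_2-V_3+V_5+V_6-E_2+E_5$. It obtains the upper bound by computing $\operatorname{im}\theta$ exactly from the toric boundary, which generates $\Pic(\widetilde Z_6)$. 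The relations $\theta(V_j)=e_{j-1}+e_j$ and $\theta(E_i)=-3e_i$ give the index-$3$ sublattice on which the alternating sum vanishes modulo $3$.

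Finally, your reduction ``$\Br(X)$ depends only on the rigid base'' does not reach the auxiliary families $B_{1,16/3}$ and $B_{2,8/3}$, which are not blow-ups of the bases. The paper covers them with Corti--Heuberger's Remark~9. That remark gives a smooth-point blow-up of such a $B$ which admits an alternative contraction to $\Pp(1,1,3)$, respectively to $X_{2,17/3}$. The invariance lemma is then applied in both directions.
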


The obstruction in the six-point case concerns the untwisted adherent construction only; it does not rule out unrelated semiorthogonal decompositions or the twisted adherent constructions of Karmazyn--Kuznetsov--Shinder.

The paper is organized as follows. \Cref{sec:rationality} concerns resolutions
and exceptional collections, \cref{sec:canonical-stack} treats canonical
stacks and the special McKay correspondence, and \cref{sec:kks} discusses the
singular surface and KKS descent. \Cref{sec:cascade} develops the
Corti--Heuberger cascade and the \(X_{10}\) example, including the Euler form,
while \cref{sec:further-directions} records broader consequences and further
directions. Appendix~\ref{app:brauer} contains the Brauer-group calculations.

\section{Resolutions and exceptional collections}
\label{sec:rationality}
\label{sec:orlov}

We recall the two standard inputs used throughout: rationality of the minimal resolution and preservation of full exceptional collections under blow-ups. Throughout this paper, all algebraic varieties are assumed to be defined over the field $\mathbb{C}$ of complex numbers.

Most of the foundational material on log del Pezzo surfaces discussed in this section can be found in \cite{AlexeevNikulin}. The rationality of their minimal resolutions is a standard consequence of the minimal model program (see, for instance, a recent exposition in \cite[Remark~4.3]{GugiattiRota2023}). Furthermore, our treatment of exceptional sequences on smooth rational surfaces closely follows the perspective of \cite{HillePerling2011}.

\begin{definition}
A \emph{log del Pezzo surface} is a normal projective surface \(X\) with klt singularities such that \(-K_X\) is ample.
\end{definition}

For normal surfaces, klt singularities are precisely quotient singularities \cite[Corollary 5.21]{Kollar1998}. Moreover, klt surface singularities are rational \cite[Theorem 5.22]{Kollar1998}. Hence, if \(f:\widetilde X\to X\) is any resolution, then \(f_*\OO_{\widetilde X}=\OO_X\) and \(R^i f_*\OO_{\widetilde X}=0\) for all \(i>0\). Consequently, the Leray spectral sequence gives a canonical isomorphism
\[
H^1(\widetilde X,\OO_{\widetilde X})
\cong
H^1(X,\OO_X).
\]

Although the rationality statement is standard (see, for example, \cite[Lemma~1.3]{AlexeevNikulin}), we include the following short self-contained argument because it also fixes the intersection-theoretic conventions used below.

\begin{proposition}\label{prop:log-del-pezzo-rational}
Let \(X\) be a log del Pezzo surface, and let \(f:\widetilde X\to X\) be its minimal resolution. Then \(\widetilde X\) is a smooth rational surface. In particular, \(X\) is rational.
\end{proposition}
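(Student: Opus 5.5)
The plan is to show that \(\widetilde X\) has \(q=0\) and that all plurigenera vanish, and then to apply Castelnuovo's rationality criterion. Rationality of \(X\) is then immediate, since \(f\) is birational.

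First, irregularity. Quotient singularities are rational, so the Leray argument recalled above gives \(H^1(\widetilde X,\OO_{\widetilde X})\cong H^1(X,\OO_X)\). To show that \(H^1(X,\OO_X)=0\) we apply Kawamata--Viehweg vanishing on the klt surface \(X\). The divisor \(0-K_X=-K_X\) is \(\mathbb Q\)-Cartier and ample, hence \(H^i(X,\OO_X)=H^i(X,K_X+(-K_X))=0\) for \(i>0\). Equivalently, one may work on \(\widetilde X\): write \(K_{\widetilde X}=f^*K_X+\sum a_jE_j\) with \(-1<a_j\le 0\), which holds for the minimal resolution of a klt surface singularity. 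Then \(-f^*K_X\) is nef and big, and Kawamata--Viehweg applied to \(\lceil -f^*K_X\rceil\) gives the same vanishing. In particular \(q(\widetilde X)=0\), and \(p_g=0\) as well.

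Second, plurigenera. The key intersection-theoretic point is that on the minimal resolution the discrepancies satisfy \(a_j\le 0\). This holds because \(K_{\widetilde X}\) is \(f\)-nef, and by the negativity lemma \(f^*K_X-K_{\widetilde X}\) is then effective. Hence \(-K_{\widetilde X}=f^*(-K_X)+\sum(-a_j)E_j\) is the sum of a nef and big \(\mathbb Q\)-divisor and an effective one. Suppose \(mK_{\widetilde X}\) were effective for some \(m\ge1\), say \(D\in|mK_{\widetilde X}|\). Intersecting with the nef and big class \(f^*(-K_X)\) gives
\[
D\cdot f^*(-K_X)=m\,f^*K_X\cdot f^*(-K_X)+m\sum a_jE_j\cdot f^*(-K_X)=-mK_X^2<0,
\]
using \(E_j\cdot f^*(\cdot)=0\). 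This contradicts nefness, since \(D\) is effective. Alternatively and more simply, \(f_*D\in|mK_X|\) would be effective, so \(f_*D\cdot(-K_X)\ge0\), contradicting the ampleness of \(-K_X\) together with \(K_X\cdot(-K_X)<0\). Hence \(P_m(\widetilde X)=0\) for all \(m\ge1\), and in particular \(P_2=0\).

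Castelnuovo's criterion (\(q=P_2=0\)) then shows that \(\widetilde X\) is rational. The only mildly delicate step is the vanishing of \(H^1(X,\OO_X)\). There I would rely on Kawamata--Viehweg vanishing for klt pairs, as in Koll\'ar--Mori, with the rationality of the singularities transferring the result to \(\widetilde X\). If one prefers to avoid vanishing theorems, an alternative is available: \(\kappa(\widetilde X)=-\infty\) already forces \(\widetilde X\) to be rational or birationally ruled over a curve \(B\). In the ruled case, the composite \(\widetilde X\to B\) would contract the \(f\)-exceptional curves, which are rational, so it descends to a map \(X\to B\). Then \(\rho(X)\ge2\) forces a nontrivial fibration of \(X\) over a curve of positive genus, and one still needs \(q=0\) to exclude this. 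So the vanishing step is genuinely the crux.
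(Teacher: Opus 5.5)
Your argument is correct and is essentially the paper's proof: Kawamata--Viehweg vanishing on the klt surface \(X\), together with rationality of the singularities, gives \(q(\widetilde X)=0\); intersecting a putative effective divisor in \(|mK_{\widetilde X}|\) with the nef and big class \(-f^*K_X\) yields \(-mK_X^2<0\); and Castelnuovo's criterion then concludes. The extra material is harmless but not needed: treating all \(m\) rather than \(m=2\), the negativity-lemma bound \(a_j\le 0\), the pushforward variant, and the loosely phrased closing remark on the birationally ruled case.
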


\begin{proof}
Since \(X\) has klt singularities and \(-K_X\) is ample, Kawamata--Viehweg vanishing gives \(H^1(X,\OO_X)=0\). Since the singularities of \(X\) are rational, the Leray isomorphism above gives \(H^1(\widetilde X,\OO_{\widetilde X})=0\).

It remains to show that \(H^0(\widetilde X,\OO_{\widetilde X}(2K_{\widetilde X}))=0\) holds. Suppose for contradiction that there exists an effective divisor \(D\sim 2K_{\widetilde X}\). The pullback \(-f^*K_X\) is nef and big, because \(-K_X\) is ample. Hence \((-f^*K_X)\cdot D\geq 0\). On the other hand, if
\[
K_{\widetilde X}=f^*K_X+\sum_i a_iE_i
\]
is the discrepancy formula, then we have \(f^*K_X\cdot E_i=0\) for every exceptional curve \(E_i\). Therefore, we obtain
\[
(-f^*K_X)\cdot D
=
(-f^*K_X)\cdot 2K_{\widetilde X}
=
-2K_X^2<0.
\]
This contradiction implies \(H^0(\widetilde X,\OO_{\widetilde X}(2K_{\widetilde X}))=0\). Thus, by Castelnuovo's rationality criterion, \(\widetilde X\) is rational. Since \(X\) is birational to \(\widetilde X\), the surface \(X\) is rational as well.
\end{proof}

\begin{example}
The weighted projective plane \(\Pp(1,1,3)\) is a log del Pezzo surface with a unique singularity of type \(\frac13(1,1)\). Its minimal resolution is the Hirzebruch surface \(\F_3=\Pp_{\Pp^1}(\OO\oplus\OO(3))\), and the exceptional curve is the negative section \(C_0\subset \F_3\) with \(C_0^2=-3\); see \cite[Example~3.17]{KKS2020}, or the toric resolution procedure in \cite[Chapters~10--11]{CoxLittleSchenck2011}.
\end{example}

We now recall why every smooth projective rational surface admits a full exceptional collection.

\begin{definition}
Let \(\mathcal T\) be a triangulated category. An object \(E\in\mathcal T\) is \emph{exceptional} if \(\Hom(E,E)=\C\) and \(\Ext^k(E,E)=0\) for all \(k\neq 0\). A sequence \(\langle E_1,\dots,E_n\rangle\) is an \emph{exceptional collection} if each object \(E_i\) is exceptional and \(\Ext^\bullet(E_j,E_i)=0\) for all \(j>i\). The collection is \emph{full} if it generates the triangulated category.
\end{definition}

The projective line has the standard full exceptional collection
\[
\Db(\coh \Pp^1)=\left\langle \OO,\OO(1)\right\rangle,
\]
and the projective plane has Beilinson's full exceptional collection \cite{Beilinson1978}
\[
\Db(\coh \Pp^2)=\left\langle \OO,\OO(1),\OO(2)\right\rangle.
\]

Applying Orlov's projective bundle formula \cite[Corollary 2.7]{Orlov1993} to
\[
\F_n=\Pp_{\Pp^1}(\OO\oplus\OO(n))
\]
and using the collection on \(\Pp^1\), we obtain a full exceptional collection on \(\F_n\). Let \(C_0\) denote the negative section and let \(f\) denote the fiber class. With the convention \(C_0^2=-n\), \(C_0\cdot f=1\), and \(f^2=0\), the relative tautological line bundle is
\[
\OO_{\F_n}(1)\simeq \OO(C_0+nf).
\]
Thus one such collection is
\[
\left\langle
\OO,\OO(f),\OO(C_0+nf),\OO(C_0+(n+1)f)
\right\rangle .
\]

\begin{theorem}[Orlov's blow-up formula {\cite[Theorem~4.3 and Corollary~4.4]{Orlov1993}}]\label{thm:orlov-blow-up}
Let \(S\) be a smooth projective surface, and let \(\beta:\Bl_pS\to S\) be the blow-up of \(S\) at a point \(p\), with exceptional divisor \(E\). Then there is a semiorthogonal decomposition
\[
\Db(\coh \Bl_pS)
=
\left\langle
\OO_E(-1),\beta^*\Db(\coh S)
\right\rangle,
\]
where \(\OO_E(-1)\) is regarded as an object of \(\Db(\coh \Bl_pS)\) via the inclusion \(E\hookrightarrow \Bl_pS\). In particular, if \(\Db(\coh S)\) admits a full exceptional collection, then \(\Db(\coh \Bl_pS)\) also admits a full exceptional collection.
\end{theorem}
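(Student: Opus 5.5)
The plan is to follow Orlov's original strategy for \cref{thm:orlov-blow-up}: identify \(\beta^*\) as a fully faithful functor, show that \(\OO_E(-1)\) is exceptional and lies in the left orthogonal of \(\beta^*\Db(\coh S)\), and then prove that these two pieces generate.

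\emph{Full faithfulness of \(\beta^*\).} Since \(\beta\) is a birational morphism of smooth surfaces, \(R\beta_*\OO_{\Bl_pS}=\OO_S\): here \(\beta_*\OO=\OO_S\) by normality of \(S\), and \(R^1\beta_*\OO=0\) because the fibre over \(p\) is \(E\cong\Pp^1\), which has \(H^1(\OO_E(-m))=0\) for the relevant thickenings. The projection formula then gives \(R\beta_*L\beta^*F\cong F\). By adjunction,
\[
\Hom(\beta^*F,\beta^*G)\cong\Hom(F,R\beta_*\beta^*G)\cong\Hom(F,G),
\]
so \(\beta^*\) is fully faithful.

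\emph{Exceptionality and semiorthogonality.} The object \(\OO_E(-1)\) is exceptional by the standard computation for a divisor \(E\cong\Pp^1\) with \(E^2=-1\). The triangle \(\OO_E(-1)\to\OO_E(-1)\otimes\OO(E)[1]\) gives \(\Ext^\bullet(\OO_E(-1),\OO_E(-1))\) as the cohomology of \(\OO_E\oplus\OO_E(E)[-1]\). Since \(\OO_E(E)=\OO_E(-1)\) is acyclic, this reduces to \(H^\bullet(\OO_E)=\C\). For semiorthogonality, we need \(\Hom^\bullet(\beta^*F,\OO_E(-1))=0\). By adjunction this is \(\Hom^\bullet(F,R\beta_*\OO_E(-1))\), and
\[
R\beta_*\OO_E(-1)=H^\bullet(\Pp^1,\OO(-1))\otimes\OO_p=0.
\]
This is exactly why the twist \(-1\) is chosen. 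It shows that \(\beta^*\Db(\coh S)\subset{}^\perp\OO_E(-1)\), which is the condition for the ordered pair \(\langle\OO_E(-1),\beta^*\Db(\coh S)\rangle\) to be semiorthogonal.

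\emph{Generation.} This is the main obstacle. Let \(A\) be in the right orthogonal of both pieces, so that \(R\beta_*A=0\) (using the adjunction \(\beta^*\dashv R\beta_*\) for the component \(\beta^*\Db(\coh S)\)) and \(\Hom^\bullet(\OO_E(-1),A)=0\). I will try to show that \(A=0\) by using the full exceptional pair \(\langle\OO_E(-1),\OO_E\rangle\) on \(E\) together with support considerations.

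\begin{enumerate}
\item Because \(R\beta_*A=0\), \(A\) restricted to \(\Bl_pS\setminus E\cong S\setminus\{p\}\) vanishes, so \(A\) is supported on \(E\).
\item By Serre duality, \(\Hom^\bullet(A,\OO_E(-1)\otimes\omega[2])=0\). Since \(\omega|_E=\OO_E(-1)\), this says \(A\) is orthogonal to \(\OO_E(-2)\) on that side.
\item Using \(R\beta_*A=0\) again, we get \(\Hom^\bullet(\OO_{\Bl_pS},A\otimes\OO(-kE))\)-type vanishings, obtained as \(\beta^*\) of line bundles on \(S\) twisted appropriately.
\end{enumerate}

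The cleaner route is Orlov's: a complex supported on \(E\) whose derived restriction \(Li_E^*A\) is orthogonal to the generators \(\OO_E(-1),\OO_E\) of \(\Db(\Pp^1)\) must have \(Li_E^*A=0\). Then Nakayama's lemma applied to the top cohomology sheaf, which is set-theoretically supported on \(E\), forces \(A=0\). The needed vanishings for \(Li_E^*A\) come from the adjunction \(\Hom(Li_E^*A,\OO_E(j))=\Hom(A,i_{E*}\OO_E(j))\). The case \(j=-1\) is given by Serre duality via the left orthogonality of \(\OO_E(-1)\). The case \(j=0\) uses the sequence \(0\to\OO(-E)\to\OO\to\OO_E\to0\), together with \(R\beta_*\) vanishing and \(\beta^*\OO_S=\OO\), \(\beta^*\)-compatibility of twists by \(\OO(-E)=\)\,the ideal of the fibre.

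I expect the bookkeeping of which orthogonality (left or right) yields which twist to be the delicate point. I will first try the single-step version: work directly with the local computation near \(p\), where \(\Bl_p\A^2\) is the total space of \(\OO_{\Pp^1}(-1)\), and reduce to Beilinson's collection on \(\Pp^1\).

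The final claim follows by induction. Given a full exceptional collection \(\langle F_1,\dots,F_n\rangle\) on \(S\), the collection \(\langle\OO_E(-1),\beta^*F_1,\dots,\beta^*F_n\rangle\) is exceptional, by full faithfulness of \(\beta^*\) and the semiorthogonality established above. It is full, by the generation step.
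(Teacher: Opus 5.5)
\emph{Where the proposal stands.} The paper gives no proof of this statement; it quotes Orlov. Your outline follows Orlov's own strategy: full faithfulness of $\beta^*$, semiorthogonality via $R\beta_*\OO_E(-1)=0$, and generation via support on $E$ plus Nakayama.

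The first two steps are fine up to two small slips.
\begin{itemize}
\item In the formal-functions argument, the graded pieces of the thickenings are $I_E^m/I_E^{m+1}\cong\OO_E(m)$ with $m\geq 0$. They are not $\OO_E(-m)$, which has $H^1\neq 0$ for $m\geq 2$.
\item In the exceptionality step, the triangle you want is $\OO_E(-1)\otimes\OO_E(-E)[1]\to Li_E^*i_{E*}\OO_E(-1)\to\OO_E(-1)$.
\end{itemize}
The real problem is the generation step. As written it does not establish $Li_E^*A=0$, and it fails at exactly the bookkeeping you flagged.

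\emph{The gap.} Your hypotheses, $R\beta_*A=0$ and $\Hom^\bullet(\OO_E(-1),A)=0$, constrain maps \emph{into} $A$. The adjunction $\Hom(Li_E^*A,\OO_E(j))\cong\Hom(A,i_{E*}\OO_E(j))$ instead needs vanishing of maps \emph{out of} $A$.
\begin{itemize}
\item For $j=-1$: Serre duality turns the second hypothesis into $\Hom^\bullet(A,\OO_E(-2))=0$, as in your own item 2. So it gives $j=-2$, not $j=-1$.
\item For $j=0$: the sequence $0\to\OO(-E)\to\OO\to\OO_E\to 0$ would need $\Hom^\bullet(A,\OO)=\Hom^\bullet(A,\OO(-E))=0$. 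By Serre duality and $\omega_{\Bl_pS}=\beta^*\omega_S(E)$, these are statements about $R\beta_*(A(E))$ and $R\beta_*(A(2E))$, not about $R\beta_*A$. They do not follow from $R\beta_*A=0$: already $\OO_E(-1)$ has vanishing $R\beta_*$, while $\OO_E(-2)$ does not.
\item Also, $\OO(-E)$ is not the derived pullback of the ideal $\mathcal I_p$. In fact $L\beta^*\mathcal I_p$ is an extension of $\OO(-E)$ by $\OO_E(-1)$, and that correction term is exactly where the hypothesis on $\OO_E(-1)$ has to enter.
\end{itemize}

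\emph{How to repair it.} Stay with maps into $A$ and use the right adjoint $i_E^!$. We have $\Hom(i_{E*}\mathcal G,A)\cong\Hom_E(\mathcal G,i_E^!A)$ and $i_E^!A\cong Li_E^*A\otimes\OO_E(E)[-1]$.
\begin{itemize}
\item The case $\mathcal G=\OO_E(-1)$ is your hypothesis itself.
\item For $\mathcal G=\OO_E$, first note $\Hom^\bullet(L\beta^*\OO_p,A)\cong\Hom^\bullet(\OO_p,R\beta_*A)=0$. Next, $L\beta^*\OO_p$ has $\mathcal H^0=\OO_E$ and $\mathcal H^{-1}=\OO_E(-1)$, so there is a triangle $\OO_E(-1)[1]\to L\beta^*\OO_p\to\OO_E$. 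Together with the hypothesis on $\OO_E(-1)$, this gives $\Hom^\bullet(\OO_E,A)=0$.
\end{itemize}
Hence $i_E^!A$ is right orthogonal to $\langle\OO_E(-1),\OO_E\rangle=\Db(\coh E)$, so $i_E^!A=0$ and therefore $Li_E^*A=0$. Your Nakayama argument on the top cohomology sheaf, which is supported on $E$, then gives $A=0$.

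Equivalently, if you keep your $Li_E^*$ formulation, the correct pair of twists is $j=-2,-1$. The twist $-2$ comes from Serre duality applied to the hypothesis on $\OO_E(-1)$. The twist $-1$ comes from Serre-dualizing $\Hom^\bullet(L\beta^*\OO_p,A)=0$, since $L\beta^*\OO_p\otimes\omega_{\Bl_pS}$ is an extension of $\OO_E(-1)$ by $\OO_E(-2)[1]$.

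With this fix, your final inductive claim about full exceptional collections follows as you state.
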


\begin{corollary}[{\cite[Theorem~5.6]{HillePerling2011}}]\label{cor:rational-surface-fec}
Every smooth projective rational surface admits a full exceptional collection.
\end{corollary}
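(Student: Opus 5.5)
The strategy is to reduce to minimal rational surfaces, which already carry explicit full exceptional collections, and then climb back up through blow-ups using \cref{thm:orlov-blow-up}. Let \(S\) be a smooth projective rational surface. By the classification of minimal rational surfaces, contracting \((-1)\)-curves one at a time (Castelnuovo's contractibility criterion) yields a finite chain of point blow-ups
\[
S=S_m\to S_{m-1}\to\cdots\to S_0,
\]
where \(S_0\) is either \(\Pp^2\) or a Hirzebruch surface \(\F_n\) with \(n\neq 1\). The process terminates because each contraction lowers the Picard rank by one, and it ends at a minimal rational surface. By the classical theorem on minimal rational surfaces, that endpoint is \(\Pp^2\) or some \(\F_n\).

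The base case is handled as in the discussion preceding \cref{thm:orlov-blow-up}:
\begin{itemize}
\item For \(\Pp^2\) we use Beilinson's collection \(\langle \OO,\OO(1),\OO(2)\rangle\).
\item For \(\F_n\) we apply Orlov's projective bundle formula to the collection \(\langle\OO,\OO(1)\rangle\) on \(\Pp^1\). This gives \(\langle \OO,\OO(f),\OO(C_0+nf),\OO(C_0+(n+1)f)\rangle\).
\end{itemize}

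We then induct along the chain. Suppose \(\Db(\coh S_{i-1})=\langle E_1,\dots,E_k\rangle\) is full exceptional. Since \(\beta_i^*\) is fully faithful, the objects \(\beta_i^*E_j\) stay exceptional and keep their semiorthogonality. Since \(E_i\) is a \(\Pp^1\) with normal bundle \(\OO(-1)\), we have \(\Ext^\bullet_{S_i}(\OO_{E_i}(-1),\OO_{E_i}(-1))=H^\bullet(\OO_{\Pp^1})\oplus H^{\bullet-1}(\OO_{\Pp^1}(-1))=\C\), so \(\OO_{E_i}(-1)\) is exceptional. The semiorthogonal decomposition of \cref{thm:orlov-blow-up} then shows that
\[
\langle \OO_{E_i}(-1),\beta_i^*E_1,\dots,\beta_i^*E_k\rangle
\]
is a full exceptional collection on \(S_i\). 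After \(m\) steps we obtain a full exceptional collection on \(S\), of length \(\rk K_0(S)=\rk\Pic S+2\).

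The only non-formal input, and hence the main obstacle, is the structure theorem for rational surfaces. This is the statement that every smooth projective rational surface is an iterated point blow-up of \(\Pp^2\) or of some \(\F_n\). I would cite it from the Enriques--Kodaira classification (every smooth projective surface dominates a minimal model, and the minimal rational surfaces are exactly \(\Pp^2\) and \(\F_n\) with \(n\neq1\)) rather than reprove it. Everything after that is a direct iteration of \cref{thm:orlov-blow-up}, as in \cite[Theorem~5.6]{HillePerling2011}.
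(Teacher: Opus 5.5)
Your proposal is correct and matches the paper's proof: both reduce to \(\Pp^2\) or \(\F_n\) via the classification of minimal rational surfaces, take Beilinson's collection and the projective-bundle collection as base cases, and iterate \cref{thm:orlov-blow-up} back up the chain of blow-ups. Your extra details — the check that \(\OO_{E}(-1)\) is exceptional and the length count \(\rk\Pic S+2\) — are correct and make explicit what the paper leaves implicit, though you should rename either the exceptional curves or the collection objects, since \(E_i\) currently denotes both.
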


\begin{proof}
Every smooth projective rational surface is obtained from either \(\Pp^2\) or a Hirzebruch surface \(\F_n\) by a sequence of blow-ups at points. The surfaces \(\Pp^2\) and \(\F_n\) have full exceptional collections. By \cref{thm:orlov-blow-up}, each point blow-up preserves the existence of a full exceptional collection. Hence every smooth projective rational surface admits a full exceptional collection.
\end{proof}

Combining \cref{prop:log-del-pezzo-rational} and \cref{cor:rational-surface-fec}, we obtain the following consequence.

\begin{corollary}\label{cor:resolution-fec}
Let \(X\) be a log del Pezzo surface, and let \(f:\widetilde X\to X\) be its minimal resolution. Then \(\Db(\coh \widetilde X)\) admits a full exceptional collection.
\end{corollary}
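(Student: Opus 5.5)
The statement is an immediate combination of \cref{prop:log-del-pezzo-rational} and \cref{cor:rational-surface-fec}, so the plan is simply to chain them. First, apply \cref{prop:log-del-pezzo-rational} to the minimal resolution \(f:\widetilde X\to X\). It shows that \(\widetilde X\) is a smooth projective rational surface. Smoothness holds because \(f\) is a resolution, and projectivity follows because a resolution of a projective surface obtained by blowing up is projective; the minimal resolution is dominated by such a resolution and is obtained from it by contracting \((-1)\)-curves, so it is also projective.

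Second, feed \(\widetilde X\) into \cref{cor:rational-surface-fec}. That corollary rests on two inputs. The first is the classical fact that every smooth projective rational surface is obtained from \(\Pp^2\) or some \(\F_n\) by iterated point blow-ups; one runs the MMP, contracting \((-1)\)-curves down to a minimal rational surface. The second is the full exceptional collections on \(\Pp^2\) and \(\F_n\) recalled above, namely Beilinson's collection and the collection \(\langle \OO,\OO(f),\OO(C_0+nf),\OO(C_0+(n+1)f)\rangle\) obtained from Orlov's projective bundle formula. Then induct on the number of blow-ups using \cref{thm:orlov-blow-up}. At each step one prepends \(\OO_E(-1)\) to the pullback of the previous collection. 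Pullback preserves exceptionality and semiorthogonality because \(\beta^*\) is fully faithful (\(R\beta_*\OO=\OO\)), and fullness follows from the semiorthogonal decomposition.

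There is no real obstacle, since everything is assumed from earlier results. The only point deserving a sentence is that the minimal resolution is projective, so that \cref{cor:rational-surface-fec} applies. I would justify this as above, or by noting that any smooth proper algebraic surface is projective (Zariski).
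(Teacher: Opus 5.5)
Your proposal is correct and matches the paper, which obtains this corollary simply by combining \cref{prop:log-del-pezzo-rational} with \cref{cor:rational-surface-fec}. Your extra remark on the projectivity of the minimal resolution, via Zariski's theorem or Castelnuovo contraction, fills in a point the paper leaves implicit, and it is fine.
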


\section{Canonical stacks and exceptional collections}
\label{sec:canonical-stack}

We now introduce the stack-theoretic replacement of a singular surface and recall the local representation-theoretic input needed for the semiorthogonal decomposition. We then prove the full exceptional collection theorem for canonical stacks of log del Pezzo surfaces with \(\frac13(1,1)\) singularities.

\begin{definition}
Let \(X\) be a normal surface with quotient singularities. The \emph{canonical stack} of \(X\) is the smooth Deligne--Mumford stack \(\pi:\XX\to X\) whose coarse moduli space is \(X\), whose local models at quotient singularities are the corresponding quotient stacks, and whose structure morphism is an isomorphism over the smooth locus of \(X\).
\end{definition}

At a singular point of type \(\frac13(1,1)\), the local analytic model is \(\A^2/\muthree\), where \(\muthree\) acts by \(\zeta\cdot(x,y)=(\zeta x,\zeta y)\). Hence the canonical stack is locally \([\A^2/\muthree]\). Moreover, the minimal resolution of this singularity is determined by the Hirzebruch--Jung continued fraction \(\frac31=[3]\), so the exceptional locus consists of one smooth rational curve \(E\) with \(E^2=-3\).

We recall the local terminology used in the special McKay correspondence. Let \(G\subset \GL_2(\C)\) be a finite small subgroup, let \(R=\C[x,y]\), and let
\[
\tau:Y\longrightarrow \Spec R^G
\]
be the minimal resolution of the quotient singularity. Here small means that \(G\) contains no pseudo-reflections. For an irreducible representation \(\rho\) of \(G\), set
\[
M_\rho:=(R\otimes \rho^\vee)^G.
\]
This is a reflexive \(R^G\)-module. The associated \emph{full sheaf} on \(Y\) is
\[
\mathcal M_\rho:=\tau^*M_\rho/\text{torsion}.
\]
Wunram's correspondence associates indecomposable full sheaves on \(Y\) with irreducible representations of \(G\). A full sheaf \(\mathcal M_\rho\) is called \emph{special} if
\[
H^1(Y,\mathcal M_\rho^\vee)=0.
\]
Equivalently, the corresponding representation \(\rho\) is called a \emph{special representation}. A representation is called \emph{non-special} if it is not special; see \cite{Wunram1988} and also \cite[Section 2]{Ishii2015}.

\begin{lemma}\label{lem:non-special-13}
For a cyclic quotient singularity of type \(\frac13(1,1)\), the unique non-special representation of the stabilizer group \(\muthree\) is \(\rho_2\), where \(\rho_i(\zeta)=\zeta^i\).
\end{lemma}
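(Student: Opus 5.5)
The plan is to compute directly with Wunram's criterion. Set \(G=\muthree\) acting by \(\zeta\cdot(x,y)=(\zeta x,\zeta y)\) and \(R=\C[x,y]\). The irreducible representations are \(\rho_0,\rho_1,\rho_2\). First I will identify the reflexive modules. Since \(\rho_i^\vee=\rho_{-i}\), the module
\[
M_{\rho_i}=(R\otimes\rho_i^\vee)^G
\]
is the span of monomials \(x^ay^b\) with \(a+b\equiv i \pmod 3\), up to the sign convention for the action. Thus \(M_{\rho_0}=R^G\) is the Veronese ring, \(M_{\rho_1}\) is generated by \(x,y\), and \(M_{\rho_2}\) is generated by \(x^2,xy,y^2\). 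If the opposite convention is used, \(\rho_1\) and \(\rho_2\) are interchanged in the bookkeeping, and I will keep track of this carefully.

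Next I will compute the full sheaves on the minimal resolution \(Y\). Here \(Y\) is the total space of \(\OO_{\Pp^1}(-3)\), with zero section \(E\), \(E^2=-3\). The pullback of \(M_{\rho_i}\) modulo torsion is a line bundle \(\mathcal M_i\). Its degree on \(E\) is determined by the fact that the generators, as functions on \(Y\), vanish to order \(i\) along \(E\). Hence \(\mathcal M_i\cong\OO_Y(-iE)\) restricted appropriately, so
\[
\deg(\mathcal M_i|_E)=-iE^2=3i\cdot\tfrac13\cdot 3/3 .
\]
Concretely, I expect \(c_1(\mathcal M_i)\cdot E=i\) for \(i=0,1,2\). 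This matches Wunram's normalization \(c_1(\mathcal M_\rho)\cdot E_j=\delta\) for special sheaves. One can also check it via the identification \(Y=\mathrm{Tot}(\OO_{\Pp^1}(-3))\), where \(\mathcal M_i\) is the pullback of \(\OO_{\Pp^1}(i)\) along the projection \(Y\to\Pp^1\). Indeed, \(x,y\) become the fiberwise coordinates twisted by \(\OO(1)\).

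Then I will test speciality. The dual is \(\mathcal M_i^\vee=p^*\OO_{\Pp^1}(-i)\), and \(H^1\) is computed by pushing forward along the affine morphism \(p\):
\[
H^1(Y,\mathcal M_i^\vee)=\bigoplus_{k\ge0}H^1\bigl(\Pp^1,\OO(-i)\otimes\OO(3k)\bigr).
\]
This uses \(p_*\OO_Y=\bigoplus_{k\ge 0}\OO(3k)\). Only the term \(k=0\) can contribute, giving \(H^1(\Pp^1,\OO(-i))\). This group vanishes for \(i=0,1\) and is \(\C\) for \(i=2\). So \(\rho_0\) and \(\rho_1\) are special and \(\rho_2\) is the unique non-special representation. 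This is consistent with Wunram's count: the number of special nontrivial representations equals the number of exceptional curves, which is one.

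The main obstacle is the convention bookkeeping. The dual \(\rho^\vee\) in the definition of \(M_\rho\) and the sign of the action determine whether the label is \(\rho_2\) or \(\rho_1\). I will fix the convention so that \(M_{\rho_1}=(x,y)R^G\)-module-like, matching Ishii--Ueda's Theorem 3.2, which lists \(\rho_0,\rho_1\) as special for \(\frac1n(1,q)\) with continued fraction \([n]\). I will cross-check the result against that theorem and Gugiatti--Rota Example 2.6.
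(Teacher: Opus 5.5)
Your argument is correct, and it takes a genuinely different route from the paper.

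The paper never looks at the full sheaves. It quotes the combinatorial form of Wunram's correspondence (Ishii--Ueda, Theorem~3.2), runs the sequence $i_0=3$, $i_1=1$, $i_2=0$ attached to $\frac31=[3]$, and reads off $\rho_0,\rho_1$ as the special representations. You instead check the defining criterion $H^1(Y,\mathcal M_\rho^\vee)=0$ directly on the explicit resolution $p\colon Y=\mathrm{Tot}(\OO_{\Pp^1}(-3))\to\Pp^1$. You use $\mathcal M_{\rho_i}\cong p^*\OO_{\Pp^1}(i)$ and $p_*\OO_Y=\bigoplus_{k\ge0}\OO_{\Pp^1}(3k)$, so that only $H^1(\Pp^1,\OO(-i))$ survives.

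The paper's route takes two lines and handles every $\frac1n(1,q)$ uniformly. However, it is a black box that silently inherits the labeling conventions of the citation. Yours is self-contained from the definition stated in the paper. It also makes explicit that the answer is $\rho_2$ rather than $\rho_1$ only under the convention that $x,y$ span a copy of $\rho_1$ in $R$, a dependence you correctly flag. Your argument extends verbatim to $\frac1n(1,1)$: there $H^1(Y,\mathcal M_{\rho_i}^\vee)\cong H^1(\Pp^1,\OO(-i))$ is nonzero exactly for $2\le i\le n-1$. This gives an independent check of the local count used in Proposition~\ref{prop:general-log-del-pezzo-stack}. It does not extend as cheaply to $q\neq1$, where the exceptional locus is a chain of curves.

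One sentence of your write-up should be deleted. The claim $\mathcal M_i\cong\OO_Y(-iE)$, with generators ``vanishing to order $i$ along $E$'', is false: that bundle has degree $3i$ on $E$, and in fact $\OO_Y(-E)\cong p^*\OO_{\Pp^1}(3)$. The identification you actually use is justified as follows. Write $R_d$ for the degree-$d$ part of $R$. Then $H^0(Y,p^*\OO_{\Pp^1}(i))=\bigoplus_{k\ge0}R_{i+3k}=M_{\rho_i}$, and $p^*\OO_{\Pp^1}(i)$ is generated by the sections coming from $R_i$. Hence $\tau^*M_{\rho_i}\to p^*\OO_{\Pp^1}(i)$ is surjective and is an isomorphism off $E$, where both sides are invertible. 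Its kernel is therefore exactly the torsion.
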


\begin{proof}
In the cyclic case \(\frac1n(1,q)\), the special representations can be read from the sequence associated with the Hirzebruch--Jung continued fraction; see \cite[Theorem 3.2]{Ishii2015}. In our case, \(n=3\), \(q=1\), and \(\frac31=[3]\), so the associated sequence is
\[
i_0=3,\qquad i_1=1,\qquad i_2=0.
\]
Character indices are read modulo \(3\). Hence
\[
\rho_{i_0}=\rho_3=\rho_0,
\qquad
\rho_{i_1}=\rho_1
\]
are the special representations. Since the irreducible characters of \(\muthree\) are exactly \(\rho_0,\rho_1,\rho_2\), the unique non-special representation is \(\rho_2\). This local computation is also noted in \cite[Example~2.6]{GugiattiRota2023}.
\end{proof}

\begin{definition}
Let \(p\in X\) be a singular point of type \(\frac13(1,1)\), and let
\[
\iota_p:B\muthree\hookrightarrow \XX
\]
be the residual gerbe over \(p\). The \emph{local exceptional object} associated to $p$ is defined as
\[
\mathcal E_p
:=
\iota_{p*}\bigl(\OO_{B\muthree}\otimes \rho_2\bigr).
\]
Equivalently, after identifying an étale neighborhood of \(p\) with the quotient stack \([\A^2/\muthree]\), the object \(\mathcal E_p\) is represented by the \(\muthree\)-equivariant module
\[
\C[x,y]/(x,y)\otimes \rho_2.
\]
In the local notation of Ishii--Ueda, this corresponds to the object  \(\OO_0\otimes\rho_2\).    
\end{definition}

\begin{theorem}[Ishii--Ueda {\cite[Theorem 1.4]{Ishii2015}}]\label{thm:ishii-ueda-global}
Let \(X\) be a surface with at worst quotient singularities, let \(\XX\) be its canonical stack, and let \(Y\to X\) be the minimal resolution. Then there is a fully faithful functor
\[
\Phi:\Db(\coh Y)\to \Db(\coh \XX)
\]
and a semiorthogonal decomposition
\[
\Db(\coh \XX)
=
\left\langle
\mathcal E_1,\dots,\mathcal E_\ell,\Phi(\Db(\coh Y))
\right\rangle,
\]
where \(\mathcal E_1,\dots\mathcal E_\ell\) form an exceptional collection. 
\end{theorem}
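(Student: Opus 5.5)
This theorem is quoted from Ishii--Ueda, so within the paper it is invoked rather than reproved. If I were to prove it, I would reduce everything to the local model at each singular point and then glue. The plan has three steps.

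\emph{Step 1: construct \(\Phi\).} Near a quotient singularity \(p\) with stabilizer \(G\subset\GL_2(\C)\), I would realize the minimal resolution \(Y\) as a fine moduli space of \(\theta\)-stable \(G\)-constellations on \(\A^2\), for a stability parameter \(\theta\) chosen generically in the chamber singled out by the special representations. The universal family \(\mathcal U\) on \(Y\times[\A^2/G]\) then gives a Fourier--Mukai functor \(\Phi_{\mathcal U}\). Away from the exceptional locus, \(\mathcal U\) is the structure sheaf of the graph of the identity of the smooth locus. So the local kernels glue with the diagonal over \(X^{\mathrm{sm}}\) to a global kernel on \(Y\times_X\XX\), and this kernel defines \(\Phi\).

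\emph{Step 2: full faithfulness.} I would use the Bridgeland--King--Reid criterion: it suffices that the images of skyscrapers \(\OO_y\) be simple and mutually orthogonal in all degrees. Outside the exceptional curves, the functor is the identity, so only points on the exceptional curves matter. There the images are the stable constellations themselves. Stability gives \(\Hom\)-orthogonality and simplicity. Serre duality on the \(2\)-dimensional stack gives the \(\Ext^2\) statements. The remaining \(\Ext^1\) statements come from a dimension count, since \(\dim Y=2\).

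\emph{Step 3: the complement.} The right orthogonal to \(\Phi(\Db(\coh Y))\) is supported on the residual gerbes, because over the smooth locus \(\Phi\) is an equivalence. Locally, \(\Db(\coh[\A^2/G])\) is generated by the objects \(\OO_0\otimes\rho\) together with the image of \(\Phi\). Through Wunram's correspondence and the vanishing \(H^1(Y,\mathcal M_\rho^\vee)=0\), the special \(\rho\) are exactly those for which \(\OO_0\otimes\rho\) lies in \(\Phi(\Db(\coh Y))\) (up to the \(\Phi\)-images of the full sheaves). The complement is therefore generated by the non-special ones \(\mathcal E_i=\iota_{p*}(\OO_{BG}\otimes\rho)\). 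Their self-Ext groups come from the Koszul resolution:
\[
\Ext^k(\OO_0\otimes\rho,\OO_0\otimes\rho')\cong\bigl(\Lambda^kV\otimes\rho^\vee\otimes\rho'\bigr)^G .
\]
Smallness of \(G\) kills \(\Ext^1\) and \(\Ext^2\) between a non-special representation and itself, so each \(\mathcal E_i\) is exceptional. Choosing the ordering prescribed by Ishii--Ueda makes the backward Ext groups vanish. Objects supported over distinct singular points are fully orthogonal, so the local collections assemble into the stated global semiorthogonal decomposition.

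\emph{Main obstacle.} I expect the hardest step to be the generation statement in Step 3: showing that the non-special \(\OO_0\otimes\rho\) are exactly what is missing, rather than just orthogonal to the image of \(\Phi\). My first attempt would compute \(\Phi^{!}\) (the adjoint) on \(\OO_0\otimes\rho\) using the reflexive modules \(M_\rho\). I would then check that its cone vanishes precisely when \(\rho\) is special, with the \(H^1\) of the full sheaf appearing as the obstruction.
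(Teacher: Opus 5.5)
The paper gives no argument beyond citing Ishii--Ueda, so what matters is whether your reconstruction would work. Two of its steps fail as written.

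\textbf{Step 2: Serre duality does not give the $\Ext^2$ vanishing.} Non-special representations occur only when $G\not\subset\mathrm{SL}_2(\C)$; for $\frac13(1,1)$ one has $\rho_{\det}=\rho_2$. In that case $\omega_{[\A^2/G]}$ is $\OO$ twisted by the nontrivial character $\rho_{\det}^{\pm1}$. Duality therefore turns $\Ext^2(\Phi\OO_{y_1},\Phi\OO_{y_2})$ into $\Hom(\Phi\OO_{y_2},\Phi\OO_{y_1}\otimes\rho_{\det}^{\pm1})^\vee$. The twisted constellation is stable for a different parameter, so stability gives no control over this $\Hom$. This is exactly the point where Bridgeland--King--Reid use $G\subset\mathrm{SL}$. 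For small $G\subset\GL_2(\C)$ you need Ishii's separate analysis of $G$-clusters on the minimal resolution, which is $G$-Hilb, and that is the chamber you should fix in Step~1. Only after this does $\chi=0$ settle $\Ext^1$.

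\textbf{Step 3: the dichotomy is misidentified.} Once a non-special representation exists, no simple $\OO_0\otimes\rho$ lies in $\Phi(\Db(\coh Y))$. Take $\frac13(1,1)$ with $x,y$ of weight $1$, which is the convention making $\rho_2$ non-special. The Koszul complex gives $\Ext^k(\OO_0\otimes\rho_a,\OO_0\otimes\rho_b)\neq0$ if and only if $b\equiv a+k\pmod 3$. So $\OO_0\otimes\rho_0$ has nonzero $\Ext^2$ into $\OO_0\otimes\rho_2$, and $\OO_0\otimes\rho_1$ has nonzero $\Ext^1$ into it. Neither lies in ${}^\perp\langle\OO_0\otimes\rho_2\rangle=\Phi(\Db(\coh Y))$. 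Your plan to show that the cone of $\Phi\Phi^{!}(\OO_0\otimes\rho)\to\OO_0\otimes\rho$ vanishes exactly for special $\rho$ therefore targets a false statement.

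The correct split is between free modules and simples. For special $\rho$, the free module $\OO_{\A^2}\otimes\rho$ is $\Phi$ of the dual full sheaf $\mathcal M_\rho^\vee$ (up to conventions), and this is where $H^1(Y,\mathcal M_\rho^\vee)=0$ enters. Generation then needs no cone computation:
\begin{itemize}
\item The special full sheaves form a tilting bundle on $Y$ (Van den Bergh, Wemyss), so $\Phi(\Db(\coh Y))$ is generated by the $\OO_{\A^2}\otimes\rho$ with $\rho$ special.
\item $\RHom(\OO_{\A^2}\otimes\rho',W)$ is the $\rho'$-isotypic part of $R\Gamma(W)$. In particular $\RHom(\OO_{\A^2}\otimes\rho',\OO_0\otimes\rho)=\Hom_G(\rho',\rho)$, so $\Phi^{!}(\OO_0\otimes\rho)=0$ for non-special $\rho$.
\item Any $W$ in the right orthogonal has cohomology with no special isotypic part, in particular no invariants. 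Hence its cohomology is of finite length at the origin, with only non-special composition factors.
\end{itemize}

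\textbf{A smaller point.} $\Ext^2(\OO_0\otimes\rho,\OO_0\otimes\rho)=\Hom_G(\rho\otimes\rho_{\det},\rho)$ vanishes because $\rho\otimes\rho_{\det}\not\cong\rho$, not because $G$ is small. Smallness only kills $\Ext^1$.
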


For the cyclic singularities \(\frac1n(1,1)\) used below, the exceptional collection in the Ishii--Ueda complement is given by the residual-gerbe simple objects corresponding to the non-special representations; see \cite[Proposition~1.1 and Theorem~1.2]{Ishii2015} and \cite[Example~2.6]{GugiattiRota2023}. Hence a singularity of type \(\frac13(1,1)\) contributes exactly one exceptional object to the complement, namely the object \(\mathcal E_p\) defined above.

We now prove the main theorem for canonical stacks.

\begin{theorem}\label{thm:general-fec}
Let \(X\) be a log del Pezzo surface whose singularities are all of type \(\frac13(1,1)\). Let \(\XX\) be its canonical stack, and let \(\widetilde X\to X\) be the minimal resolution. Suppose that the singular points of \(X\) are \(p_1,\dots,p_r\). Then there is a semiorthogonal decomposition
\[
\Db(\coh \XX)
=
\left\langle
\mathcal E_{p_1},\dots,\mathcal E_{p_r},
\Phi(\Db(\coh \widetilde X))
\right\rangle.
\]
In particular, \(\Db(\coh \XX)\) admits a full exceptional collection.
\end{theorem}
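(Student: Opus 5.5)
The plan is to specialize \cref{thm:ishii-ueda-global} to our situation and then splice in the collection on the resolution. First, \(X\) has only quotient singularities, since klt surface singularities are quotient singularities, and each \(p_i\) is of type \(\frac13(1,1)\). So \cref{thm:ishii-ueda-global} applies to \(X\), its canonical stack \(\XX\), and the minimal resolution \(\widetilde X\to X\). It gives a fully faithful functor \(\Phi\) and a semiorthogonal decomposition \(\Db(\coh\XX)=\langle \mathcal E_1,\dots,\mathcal E_\ell,\Phi(\Db(\coh\widetilde X))\rangle\).

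The key step is to identify the complement \(\langle\mathcal E_1,\dots,\mathcal E_\ell\rangle\) explicitly. The Ishii--Ueda construction is local over the singular points: the complement is generated by the objects \(\iota_{p*}(\OO_{B G_p}\otimes\rho)\), where \(p\) runs over the singular points and \(\rho\) over the non-special irreducible representations of the stabilizer \(G_p\). This is \cite[Proposition~1.1 and Theorem~1.2]{Ishii2015}, as recalled after \cref{thm:ishii-ueda-global}. By \cref{lem:non-special-13}, each \(p_i\) has exactly one non-special representation, \(\rho_2\). Hence \(\ell=r\) and the objects are exactly \(\mathcal E_{p_1},\dots,\mathcal E_{p_r}\).

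As a consistency check I will verify directly that these objects are exceptional and mutually orthogonal. For distinct \(i\neq j\) the supports are disjoint, so \(\Ext^\bullet(\mathcal E_{p_i},\mathcal E_{p_j})=0\); in particular the ordering among them is irrelevant. For a single point, the computation happens on \([\A^2/\muthree]\). The Koszul resolution of \(\OO_0\) gives
\[
\Ext^k(\OO_0\otimes\rho_2,\OO_0\otimes\rho_2)=\bigl(\Lambda^k V\bigr)^{\muthree},
\]
where \(V=\rho_1^{\oplus2}\) (up to dualizing \(V\)). The relevant characters are \(\rho_0\), \(\rho_1^{\oplus2}\) (or \(\rho_2^{\oplus2}\)) and \(\rho_2\) (or \(\rho_1\)) in degrees \(0,1,2\). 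Only degree \(0\) has invariants, and they are one-dimensional. So \(\mathcal E_p\) is exceptional.

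Finally, fullness follows from \cref{cor:resolution-fec}: \(\Db(\coh\widetilde X)\) has a full exceptional collection \(\langle G_1,\dots,G_N\rangle\). Since \(\Phi\) is fully faithful, \(\langle\Phi(G_1),\dots,\Phi(G_N)\rangle\) is a full exceptional collection of \(\Phi(\Db(\coh\widetilde X))\). Concatenating it with the \(\mathcal E_{p_i}\) in the order given by the semiorthogonal decomposition yields a full exceptional collection of \(\Db(\coh\XX)\).

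The main obstacle is the identification step: making sure that the global complement in \cref{thm:ishii-ueda-global} is exactly the collection of residual-gerbe objects twisted by the non-special representations, with no extra objects and nothing lost to the global gluing. I would address this by citing the local-to-global argument in \cite{Ishii2015}: the complement is supported on the residual gerbes, so it splits as an orthogonal sum over the singular points. This reduces the question to the local case \([\A^2/\muthree]\), where \cref{lem:non-special-13} settles it.
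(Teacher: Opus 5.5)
Your proposal is correct and follows essentially the same route as the paper: apply \cref{thm:ishii-ueda-global}, identify the complement through the unique non-special representation \(\rho_2\) from \cref{lem:non-special-13}, use disjointness of supports for mutual orthogonality, and get fullness from the rationality of \(\widetilde X\) via \cref{cor:resolution-fec}. Your Koszul computation \(\Ext^k(\mathcal E_p,\mathcal E_p)\cong(\Lambda^k V)^{\muthree}\) is an extra, correct sanity check that the paper omits, since exceptionality is already part of the Ishii--Ueda statement.
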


\begin{proof}
By \cref{prop:log-del-pezzo-rational}, the minimal resolution \(\widetilde X\) is a smooth rational surface. Therefore \(\Db(\coh \widetilde X)\) admits a full exceptional collection by \cref{cor:rational-surface-fec}.

By \cref{thm:ishii-ueda-global}, there is a fully faithful functor
\[
\Phi:\Db(\coh \widetilde X)\hookrightarrow \Db(\coh \XX)
\]
and a semiorthogonal decomposition whose complement is generated by the local exceptional objects associated with the non-special representations. Since each singular point \(p_i\) has type \(\frac13(1,1)\), its unique non-special representation is \(\rho_2\) by \cref{lem:non-special-13}. Hence the local exceptional collection over \(p_i\) consists of the single object
\[
\mathcal E_{p_i}
=
\iota_{p_i*}\bigl(\OO_{B\muthree}\otimes \rho_2\bigr).
\]
Thus the decomposition becomes
\[
\Db(\coh \XX)
=
\left\langle
\mathcal E_{p_1},\dots,\mathcal E_{p_r},
\Phi(\Db(\coh \widetilde X))
\right\rangle.
\]
The objects \(\mathcal E_{p_i}\) have disjoint supports for distinct \(p_i\), so they are mutually orthogonal. Since \(\Db(\coh \widetilde X)\) has a full exceptional collection and the remaining components are generated by the exceptional objects \(\mathcal E_{p_i}\), the category \(\Db(\coh \XX)\) has a full exceptional collection.
\end{proof}

\begin{corollary}\label{cor:length}
With the notation of \cref{thm:general-fec}, the category \(\Db(\coh \XX)\) has a full exceptional collection of length
\[
12-K_X^2+\frac{4r}{3}.
\]
Equivalently, the minimal resolution satisfies
\[
\rk K_0(\widetilde X)=12-K_X^2+\frac r3,
\]
and the canonical stack adds one further exceptional object for each singular point.
\end{corollary}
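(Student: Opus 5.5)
The length count has two parts. By \cref{thm:general-fec}, the collection on \(\Db(\coh\XX)\) consists of the \(r\) objects \(\mathcal E_{p_i}\) followed by the images under \(\Phi\) of a full exceptional collection on \(\widetilde X\). Its length is therefore \(r+N\), where \(N\) is the length of a full exceptional collection on \(\widetilde X\). A full exceptional collection gives a basis of \(K_0\), so \(N=\rk K_0(\widetilde X)\). Hence it suffices to show
\[
\rk K_0(\widetilde X)=12-K_X^2+\frac r3 ,
\]
since adding \(r\) then gives \(12-K_X^2+\frac{4r}{3}\).

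First I express \(\rk K_0(\widetilde X)\) through topological invariants. \(\widetilde X\) is a smooth rational surface by \cref{prop:log-del-pezzo-rational}, so \(K_0(\widetilde X)\otimes\mathbb Q\cong H^{\mathrm{ev}}(\widetilde X,\mathbb Q)\). Its rank is \(2+\rho(\widetilde X)=e(\widetilde X)\), because \(b_1=b_3=0\). Noether's formula, together with \(\chi(\OO_{\widetilde X})=1\) for a rational surface, gives \(e(\widetilde X)=12-K_{\widetilde X}^2\).

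Next I compare \(K_{\widetilde X}^2\) with \(K_X^2\) using the discrepancy formula from the proof of \cref{prop:log-del-pezzo-rational}. Over each \(p_i\) there is a single exceptional curve \(E_i\) with \(E_i^2=-3\), as shown at the start of \cref{sec:canonical-stack}. Write \(K_{\widetilde X}=f^*K_X+\sum a_iE_i\). Adjunction on \(E_i\cong\Pp^1\) gives \(K_{\widetilde X}\cdot E_i=-2-E_i^2=1\). Since \(f^*K_X\cdot E_i=0\), we get \(-3a_i=1\), so \(a_i=-\tfrac13\). The \(E_i\) are pairwise disjoint and orthogonal to \(f^*K_X\), so
\[
K_{\widetilde X}^2=K_X^2+\sum_i a_i^2E_i^2=K_X^2-\frac r3 .
\]
Substituting, \(\rk K_0(\widetilde X)=12-K_X^2+\frac r3\), which is the claimed formula.

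The computation is routine. The only point needing justification is the identification of the number of exceptional objects with \(\rk K_0\), together with the rank of \(K_0\) of a rational surface. For this I would note that a full exceptional collection \(\langle G_1,\dots,G_N\rangle\) makes \(K_0\) free on the classes \([G_i]\). Alternatively, one can count directly: starting from \(\F_n\) or \(\Pp^2\), each blow-up adds one object by \cref{thm:orlov-blow-up} and one to the Euler number. Either way \(N=e(\widetilde X)\).
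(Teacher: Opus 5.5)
Your proposal is correct and follows the paper's proof. Both arguments compute \(K_{\widetilde X}^2=K_X^2-\frac r3\) from the discrepancy formula with coefficients \(-\frac13\), convert this to \(\rk K_0(\widetilde X)=12-K_{\widetilde X}^2\), and add one object \(\mathcal E_{p_i}\) per singular point. The differences are cosmetic: you derive the coefficient \(-\frac13\) from adjunction and reach \(12-K_{\widetilde X}^2\) via Noether's formula, whereas the paper states the discrepancy directly and quotes \(\rho(\widetilde X)=10-K_{\widetilde X}^2\).
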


\begin{proof}
Let \(C_i\subset \widetilde X\) be the exceptional curve over \(p_i\). Since each singularity has type \(\frac13(1,1)\), we have \(C_i^2=-3\), and the discrepancy formula is
\[
K_{\widetilde X}
=
f^*K_X-\frac13\sum_{i=1}^r C_i.
\]
The curves \(C_i\) are disjoint, and \(f^*K_X\cdot C_i=0\) for all \(i\). Hence
\[
K_{\widetilde X}^2
=
K_X^2+\frac19\sum_{i=1}^r C_i^2
=
K_X^2-\frac r3.
\]
Since \(\widetilde X\) is a smooth rational surface, we have \(\rho(\widetilde X)=10-K_{\widetilde X}^2\). Therefore
\[
\rk K_0(\widetilde X)=2+\rho(\widetilde X)
=
12-K_{\widetilde X}^2
=
12-K_X^2+\frac r3.
\]
Finally, the semiorthogonal decomposition in \cref{thm:general-fec} adds one exceptional object \(\mathcal E_{p_i}\) for each singular point \(p_i\). Thus the full exceptional collection on \(\Db(\coh \XX)\) has length
\[
\left(12-K_X^2+\frac r3\right)+r
=
12-K_X^2+\frac{4r}{3}.\qedhere
\]
\end{proof}

\section{The singular surface and the KKS approach}
\label{sec:kks}

We now turn from the canonical stack to the singular surface \(X\). The categories
\(\Db(\coh \XX)\) and \(\Db(\coh X)\) should not be confused: the stack
\(\XX\) is smooth, whereas its coarse moduli space \(X\) is singular. For
\(\Db(\coh X)\), we use the approach of Karmazyn--Kuznetsov--Shinder to
derived categories of surfaces with rational singularities.

Let \(\tau:\widetilde X\to X\) be a resolution. The method of
Karmazyn--Kuznetsov--Shinder starts with a semiorthogonal decomposition of
\(\Db(\coh \widetilde X)\) and asks when it descends to a semiorthogonal
decomposition of \(\Db(\coh X)\). The relevant compatibility condition requires
the objects \(\OO_E(-1)\), where \(E\) runs through the irreducible exceptional
curves of \(\tau\), to belong to the appropriate components of the
semiorthogonal decomposition on \(\widetilde X\); see
\cite[Definition 2.7 and Theorem 2.12]{KKS2020}. Under this condition, the
pushforward functor induces a semiorthogonal decomposition of \(\Db(\coh X)\).

The basic model for us is the contraction
\[
\tau:\F_3\longrightarrow \Pp(1,1,3),
\]
which contracts the negative section \(C_0\subset \F_3\). We use the notation of
Section~2:
\[
C_0^2=-3,\qquad f^2=0,\qquad C_0\cdot f=1,
\]
where \(f\) is the class of a fiber of the ruling \(\F_3\to \Pp^1\). The
section disjoint from \(C_0\) is linearly equivalent to \(C_0+3f\).

Karmazyn--Kuznetsov--Shinder treat the contraction
\[
\F_d\longrightarrow \Pp(1,1,d)
\]
using a full exceptional collection on \(\F_d\). In their notation, \(E\) is
the exceptional curve, \(H\) is the pullback of the point class from \(\Pp^1\),
and \(C\) is the section disjoint from \(E\). The collection is
\[
\Db(\coh \F_d)
=
\left\langle
\OO(-H-E),\OO(-H),\OO,\OO(C)
\right\rangle .
\]
For \(d=3\), this gives the following full exceptional collection on \(\F_3\):
\[
\Db(\coh \F_3)
=
\left\langle
\OO(-C_0-f),\OO(-f),\OO,\OO(C_0+3f)
\right\rangle .
\]
We group it as
\[
\widetilde{\mathcal A}_0
=
\left\langle
\OO(-C_0-f),\OO(-f)
\right\rangle,
\qquad
\widetilde{\mathcal A}_1=\langle \OO\rangle,
\qquad
\widetilde{\mathcal A}_2=\langle \OO(C_0+3f)\rangle .
\]
KKS show that this decomposition is compatible with \(\tau\) and that its
first component is untwisted adherent to \(C_0\) in the sense of
\cite[Definition~3.6 and Example~3.17]{KKS2020}. The compatibility can also be
seen directly from the exact sequence
\[
0
\longrightarrow
\OO(-C_0-f)
\longrightarrow
\OO(-f)
\longrightarrow
\OO_{C_0}(-1)
\longrightarrow
0
\]
which shows that
\[
\OO_{C_0}(-1)\in \widetilde{\mathcal A}_0.
\]
Thus the compatibility condition is built into the first block of the
collection.

Therefore \cite[Theorem~2.12]{KKS2020} gives
\[
\Db(\coh \Pp(1,1,3))
=
\left\langle
\mathcal A_0,\mathcal A_1,\mathcal A_2
\right\rangle,
\]
where \(\mathcal A_i\) denotes the descended component. Moreover,
\[
\mathcal A_0\simeq \Db(K(3,1)\text{-mod}),
\qquad
\mathcal A_1\simeq \Db(\C),
\qquad
\mathcal A_2\simeq \Db(\C);
\]
see \cite[Example 3.17]{KKS2020}. Here \(\Db(\C)\) denotes the bounded derived
category of finite-dimensional complex vector spaces. The algebra \(K(3,1)\) is
the Kalck--Karmazyn algebra associated with the cyclic quotient singularity
\(\frac{1}{3}(1,1)\), and in this case
\[
K(3,1)\cong \C[z_1,z_2]/(z_1,z_2)^2;
\]
see \cite[Example 3.14]{KKS2020}.

We will only need the following compatibility observation.

\begin{lemma}
\label{lem:kks-blowups}
Let \(\beta:S'\to S\) be the blow-up of a smooth point on a smooth surface. Let
\(C\subset S\) be a smooth curve, and assume that the center of \(\beta\) does
not lie on \(C\). If \(C'\subset S'\) is the strict transform of \(C\), then
\[
\OO_{C'}(-1)\simeq \beta^*\OO_C(-1)
\]
as objects of \(\Db(\coh S')\). In particular, if
\(\OO_C(-1)\in \mathcal B\subset \Db(\coh S)\), then
\[
\OO_{C'}(-1)\in \beta^*\mathcal B .
\]
\end{lemma}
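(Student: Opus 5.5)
The plan is to work with the two cartesian squares coming from the fact that the center \(q\) of \(\beta\) does not lie on \(C\). Let \(j:C\hookrightarrow S\) and \(j':C'\hookrightarrow S'\) be the inclusions. Since \(q\notin C\), the blow-up \(\beta\) is an isomorphism over the open set \(U=S\setminus\{q\}\), which contains \(C\). Hence \(\beta\) restricts to an isomorphism \(\beta|_{C'}:C'\xrightarrow{\sim}C\), and \(C'=\beta^{-1}(C)\) scheme-theoretically, not just as the strict transform. The square with vertices \(C'\), \(S'\), \(C\), \(S\) is therefore cartesian. Here \(\OO_C(-1)\) means the degree \(-1\) line bundle on \(C\cong\Pp^1\), which is the case of interest. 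More generally it is any fixed line bundle \(L\) on \(C\), with \(\OO_{C'}(-1):=(\beta|_{C'})^*L\).

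The key step is base change for the derived pullback:
\[
L\beta^*\, j_*\OO_C(-1)\;\simeq\; j'_*\,(\beta|_{C'})^*\OO_C(-1)\;\simeq\; j'_*\OO_{C'}(-1).
\]
The first isomorphism follows in one of two equivalent ways. One option is flat base change after restricting to \(U\): \(\beta^{-1}(U)\to U\) is an isomorphism, and \(j_*\OO_C(-1)\) is supported in \(U\). The other option is to note that \(\beta\) and \(j\) are Tor-independent. Indeed, \(\mathcal{T}or_i^{\OO_S}(\OO_{S'},\OO_C)\) vanishes for \(i>0\), because away from \(q\) the map \(\beta\) is étale, and near \(q\) the sheaf \(\OO_C\) is zero. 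The cleanest route is the local one. The question is local on \(S'\); over \(\beta^{-1}(U)\) everything is identified via the isomorphism, and over a neighborhood of the exceptional curve \(D\) both sides vanish, since \(C'\cap D=\emptyset\). The only thing to check is that these two local identifications glue. They do, because the natural base-change morphism \(L\beta^*j_*\to j'_*(\beta|_{C'})^*\) exists globally, and it is an isomorphism exactly when it is so locally.

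For the "in particular" statement, note that \(\beta^*\mathcal B\) denotes the essential image of \(L\beta^*\) restricted to \(\mathcal B\); this is the component appearing in Orlov's decomposition of \cref{thm:orlov-blow-up}. So if \(\OO_C(-1)\in\mathcal B\), the displayed isomorphism immediately gives \(\OO_{C'}(-1)\in\beta^*\mathcal B\). If one wants \(\beta^*\mathcal B\) to be triangulated, so that membership is meaningful up to isomorphism, one uses that \(L\beta^*\) is fully faithful, since \(R\beta_*\OO_{S'}=\OO_S\).

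The only mild obstacle is the identification of the strict transform with the total transform, and the verification that the line bundle labelled \(\OO_{C'}(-1)\) corresponds under \(\beta|_{C'}\) to \(\OO_C(-1)\). Both follow from \(q\notin C\). In particular, \(C'^2=C^2\) and \(\deg\) is preserved under the isomorphism \(C'\cong C\), so no correction by the exceptional divisor appears. If \(q\) did lie on \(C\), one would instead have \(\beta^*C=C'+D\) and Tor terms, and the statement would fail. This is exactly why the hypothesis is needed.
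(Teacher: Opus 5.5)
Your proof is correct and follows the same route as the paper. The center avoids \(C\), so \(\beta\) is an isomorphism over a neighborhood of \(C\), giving \(C'\cong C\) and identifying the pullback of \(\OO_C(-1)\) with \(\OO_{C'}(-1)\). Your base-change/locality argument makes explicit the identification \(L\beta^*j_*\OO_C(-1)\simeq j'_*(\beta|_{C'})^*\OO_C(-1)\), which the paper takes for granted.

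One small aside is inaccurate: your closing remark about Tor terms when \(q\in C\). Since \(C\) is Cartier, one gets \(L\beta^*\OO_C\simeq\OO_{C'+D}\) with no higher Tor, so the statement fails there only because of the extra support along \(D\).
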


\begin{proof}
Since the center of the blow-up is disjoint from \(C\), the morphism
\(C'\to C\) induced by \(\beta\) is an isomorphism. Therefore the pullback of
\(\OO_C(-1)\) is \(\OO_{C'}(-1)\), viewed as a sheaf on \(S'\). This proves the
claim.
\end{proof}

\begin{remark}\label{rem:brauer-obstruction}
We also record that the Brauer obstruction does not appear in the examples
considered below. Here \(\Br(X)\) denotes the cohomological Brauer group. In
the present projective setting, its torsion classes are represented by Azumaya
algebras by Gabber's theorem, as used in
\cite[proof of Theorem~6]{Bright2013}; thus no distinction between the two
Brauer groups arises for the finite groups computed here. This is also the
Brauer group entering the obstruction in \cite[Corollary~4.8]{KKS2020}.

We use Bright's exact sequence for a normal surface \(X\) with rational
singularities. If \(\tau:Y\to X\) is the minimal resolution and
\(\Lambda_\tau\subset \Pic(Y)\) denotes the subgroup generated by the exceptional
curves, then the intersection pairing induces a homomorphism
\[
\theta:\Pic(Y)\longrightarrow \Lambda_\tau^\vee,
\qquad
D\longmapsto \bigl(E\mapsto D\cdot E\bigr),
\]
and there is an exact sequence
\[
0
\longrightarrow
\Pic(X)
\longrightarrow
\Pic(Y)
\xrightarrow{\theta}
\Lambda_\tau^\vee
\longrightarrow
\Br(X)
\longrightarrow
\Br(Y);
\]
see \cite[Proposition 1]{Bright2013}. Since the resolutions considered here
are smooth rational surfaces, their Brauer groups vanish.
\end{remark}

For \(X=\Pp(1,1,3)\), the minimal resolution is
\(\tau:\F_3\to \Pp(1,1,3)\), and the exceptional curve is \(C_0\). Thus
\(\Lambda_\tau=\Z C_0\), and we identify \(\Lambda_\tau^\vee\) with \(\Z\) by
evaluating on \(C_0\). The fiber class \(f\) satisfies
\[
f\cdot C_0=1.
\]
Therefore the map
\[
\Pic(\F_3)\longrightarrow \Lambda_\tau^\vee\cong \Z,
\qquad
D\longmapsto D\cdot C_0,
\]
is surjective. Since \(\Br(\F_3)=0\), Bright's exact sequence gives
\[
\Br(\Pp(1,1,3))=0.
\]

The same computation applies to \(X_{10}\) after the geometric identification
in Section~5. Let \(\nu:Y\to X_{10}\) be the minimal resolution. We will show
that \(Y\) is obtained from \(\F_3\) by blowing up eight smooth points away from
\(C_0\). Let
\[
\sigma:Y\longrightarrow \F_3
\]
be the resulting morphism, and let \(E\subset Y\) be the strict transform of
\(C_0\). Then \(E\) is the unique exceptional curve of \(\nu\), and \(E^2=-3\).
Hence \(\Lambda_\nu=\Z E\). Since the blow-up centers are away from \(C_0\),
the pullback \(\sigma^*f\) satisfies
\[
(\sigma^*f)\cdot E=f\cdot C_0=1.
\]
Thus the map
\[
\Pic(Y)\longrightarrow \Lambda_\nu^\vee\cong \Z,
\qquad
D\longmapsto D\cdot E,
\]
is surjective. Since \(Y\) is a smooth rational surface, \(\Br(Y)=0\), and
Bright's exact sequence gives
\[
\Br(X_{10})=0.
\]

The behavior of this obstruction in the remaining Corti--Heuberger cascades
is analyzed in Appendix~\ref{app:brauer}.

\section{Corti--Heuberger cascades and the surface \texorpdfstring{$X_{10}$}{X10}}
\label{sec:cascade}

In this section we describe a uniform construction along the Corti--Heuberger cascades and then specialize to the hypersurface
\[
X_{10}\subset \Pp(1,2,3,5).
\]
The abstract existence of a full exceptional collection on the canonical stack follows from the Ishii--Ueda decomposition and the rationality of the minimal resolution. Once a full exceptional collection on the minimal resolution of a \(qG\)-rigid base is fixed, the cascades give a uniform procedure: pull the collection through the successive blow-ups by Orlov's formula, apply \(\Phi\), and place the residual-gerbe objects prescribed by the local Ishii--Ueda complement before its image. For \(X_{10}\), we carry this out with the objects displayed explicitly.

Corti--Heuberger classify non-smooth del Pezzo surfaces with \(\frac13(1,1)\) points into \(qG\)-deformation families grouped into unprojection cascades. A cascade is obtained by blowing up nonsingular points, or equivalently by reversing a sequence of contractions of floating \((-1)\)-curves; see \cite[Definition 5]{Corti2016}. Since these curves lie in the smooth locus, the cascade preserves both the number and the analytic type of the singularities.

We follow the notation of \cite[Theorem~4]{Corti2016}: \(k\) denotes the number of \(\frac13(1,1)\) points, and \(d=K_X^2\) denotes the anticanonical degree of a surface \(X\). In the Fano index one case, \(X_{k,d}\) denotes the \(qG\)-deformation family with \(k\) singular points and degree \(d\). The rigid bases are identified in \cite[Theorem~6 and Remark~7]{Corti2016}, and \cite[Corollary~8]{Corti2016} organizes the remaining families into cascades of blow-ups from them. We choose a representative of each base family and denote it by \(Z_k\):
\[
\begin{array}{c|c|c}
k & Z_k & K_{Z_k}^2 \\ \hline
1 & S_{1,25/3}=\Pp(1,1,3) & 25/3 \\
2 & X_{2,17/3} & 17/3 \\
3 & X_{3,5} & 5 \\
4 & X_{4,7/3} & 7/3 \\
5 & X_{5,5/3} & 5/3 \\
6 & X_{6,2} & 2 .
\end{array}
\]
Every surface in an index-one family \(X_{k,d}\) of the corresponding cascade is obtained from its listed base by blowing up nonsingular points. The base step is therefore to choose a full exceptional collection on each minimal resolution \(\widetilde Z_k\); the remaining pieces are the Orlov exceptional objects from the smooth blow-ups and the Ishii--Ueda residual-gerbe objects over the stacky points.

The auxiliary families \(B_{1,16/3}\) and \(B_{2,8/3}\) may appear as outputs of a directed MMP; see \cite[Theorem~6 and Remark~7]{Corti2016}. The alternative contraction routes relating smooth-point blow-ups of these families to the chosen bases are described in \cite[Remark~9]{Corti2016}; the one-point case relevant to \(X_{10}\) is made explicit below.

We now describe the inductive step. Let \(X\) be a surface in the family \(X_{k,d}\). Then \(X\) is obtained from \(Z_k\) by
\[
K_{Z_k}^2-d
\]
blow-ups at nonsingular points. Passing to minimal resolutions gives a sequence of ordinary smooth blow-ups starting from \(\widetilde Z_k\). At each step, Orlov's blow-up formula adds the exceptional object associated with the new exceptional curve. If \(\langle G_1,\dots,G_N\rangle\) is the resulting collection on the resolution, the Ishii--Ueda order is
\[
\left\langle
\mathcal E_{p_1},\dots,\mathcal E_{p_k},
\Phi(G_1),\dots,\Phi(G_N)
\right\rangle,
\]
where \(\mathcal E_{p_i}=\iota_{p_i*}(\OO_{B\muthree}\otimes\rho_2)\). This construction gives a full exceptional collection on the canonical stack of \(X\), with length
\[
\ell=12-K_X^2+\frac{4k}{3}.
\]

We now specialize to \(X_{10}\). Let \(x_0,x_1,x_2,x_3\) be the weighted coordinates on \(\Pp(1,2,3,5)\), with weights \(1,2,3,5\).

\begin{proposition}\label{prop:X10-quasismooth}
A general hypersurface \(X_{10}\subset \Pp(1,2,3,5)\) of weighted degree \(10\) is quasismooth. Moreover, it has a unique singular point,
\[
p=[0:0:1:0],
\]
and this singularity is of type \(\frac13(1,1)\).
\end{proposition}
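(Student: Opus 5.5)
The plan is to check quasismoothness chart by chart, using Bertini to dispose of the smooth part. First, the general \(F_{10}\) has monomials \(x_0^{10}\), \(x_1^5\), \(x_3^2\) and \(x_2^3x_0\) with nonzero coefficients, since the degree-\(10\) monomials include \(x_0^{10}, x_1^5, x_3^2, x_0x_2^3, x_1x_2\cdot x_3\cdot\) (degree \(10\)), etc. Away from the coordinate strata where the weights force orbifold structure, the linear system \(|\OO(10)|\) is base-point free on the locus where some monomial of degree \(10\) in the corresponding variables exists. By the weighted Bertini theorem, the general member is therefore quasismooth outside the base locus. The base locus of \(|\OO(10)|\) consists of points where every degree-\(10\) monomial vanishes. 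Since \(x_0^{10},x_1^5,x_3^2\) are all sections, the only possible base point is where \(x_0=x_1=x_3=0\), namely \(p=[0:0:1:0]\).

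At \(p\) I would verify quasismoothness directly. Since \(10\) is not divisible by \(3\), \(p\in X_{10}\). The monomial \(x_2^3x_0\) appears with nonzero coefficient in general, so \(\partial F/\partial x_0\neq0\) at \(p\), and the affine cone is smooth there. Hence \(X_{10}\) is quasismooth everywhere. Well-formedness is the standard check: \(\Pp(1,2,3,5)\) is well formed, and \(X_{10}\) contains no codimension-one singular stratum. The only curve stratum with nontrivial stabilizer is \(\{x_0=x_2=0\}\) with \(\Z/5\) and \(\Z/2\) points, and \(\gcd\) conditions exclude the rest.

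Next I would locate the singular points. Since \(X_{10}\) is quasismooth, its singularities are exactly the points of \(X_{10}\) lying on strata of \(\Pp(1,2,3,5)\) with nontrivial stabilizer, together with the induced cyclic quotient types. The candidate points are the vertices \(P_1=[0:1:0:0]\), \(P_2=[0:0:1:0]\), \(P_3=[0:0:0:1]\) and the edge strata with stabilizers \(\muthree\) on \(\{x_0=x_1=x_3=0\}\), \(\mu_5\) on \(\{x_0=x_1=x_2=0\}\), and \(\mu_2\) on \(\{x_0=x_2=0\}\), which contains \(x_1,x_3\). The monomial \(x_3^2\) shows \(P_3\notin X_{10}\), and \(x_1^5\) shows \(P_1\notin X_{10}\). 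On the \(\mu_2\)-locus \(\{x_0=x_2=0\}\), a point \([0:a:0:b]\) with \(b\neq0\) has stabilizer \(\gcd(2,5)=1\). The point with \(b=0\) is \(P_1\), which is not on \(X_{10}\). The \(\muthree\)-stratum, the \(x_2\)-vertex, and the stratum \(\{x_0=x_1=0\}\), which contains \(x_2\) and \(x_3\), have stabilizer \(\gcd(3,5)=1\) off the vertices. Hence the only singular point is \(p\).

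Finally, I would determine the type at \(p\). In the orbifold chart \(x_2=1\), the surface is locally \(\{F=0\}\subset\A^3_{x_0,x_1,x_3}/\muthree\) with weights \((1,2,5)\equiv(1,2,2)\bmod 3\). Because \(\partial F/\partial x_0\neq0\), the implicit function theorem eliminates \(x_0\), leaving local coordinates \(x_1,x_3\) with weights \((2,2)\). Replacing the generator \(\zeta\) by \(\zeta^2\) gives type \(\frac13(1,1)\). The step needing the most care is confirming that the eliminated variable is \(x_0\) and not a weight-\(2\) coordinate. Eliminating \(x_1\) instead would give type \(\frac13(1,2)\), which is an \(A_2\) point, so the precise coefficient of \(x_2^3x_0\) is what drives the conclusion.
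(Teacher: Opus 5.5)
Your proposal is correct and follows essentially the same route as the paper: Bertini away from the base point $p$, the monomials $x_1^5$ and $x_3^2$ to exclude the other two vertices, and the linear term $x_0x_2^3$ at $p$ to eliminate $x_0$, leaving weights $(2,2)\sim(1,1)$. Your enumeration of strata is muddled: $\{x_0=x_2=0\}$ is not a $\mu_2$-locus, and the edge $\{x_0=x_3=0\}$ is omitted. It can be replaced by the paper's one-line observation that, because the weights are pairwise coprime, the three coordinate vertices are the only singular points of $\Pp(1,2,3,5)$.
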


\begin{proof}
We use the standard quasismoothness criterion for weighted hypersurfaces as in \cite{IanoFletcher2000}. The base locus of \(|\OO_{\Pp(1,2,3,5)}(10)|\) is the single point \(p=[0:0:1:0]\). On the smooth locus of the ambient weighted projective space away from \(p\), Bertini's theorem therefore gives smoothness of a general member.

Since \(\Pp(1,2,3,5)\) is well formed and its weights are pairwise coprime, its singular locus consists of the three coordinate points of weights \(2,3,5\). The monomials \(x_1^5\) and \(x_3^2\) show that a general hypersurface does not pass through \([0:1:0:0]\) or \([0:0:0:1]\). It remains to analyze the point \(p\). On the chart \(x_2\neq0\), the local quotient cover is \(\A^3\) with coordinates corresponding to \(x_0,x_1,x_3\), and the stabilizer \(\mu_3\) acts with weights
\[
(1,2,5)\equiv(1,2,2)\pmod 3.
\]
Since \(x_0x_2^3\) has weighted degree \(10\), its coefficient is nonzero in a general defining polynomial. It gives a nonzero linear term in the coordinate corresponding to \(x_0\), so the hypersurface is smooth on the local quotient cover and we may eliminate this coordinate. The induced \(\mu_3\)-action on the tangent coordinates has weights \((2,2)\), which is equivalent to \((1,1)\). Thus \(X_{10}\) is quasismooth and its unique singularity is of type \(\frac13(1,1)\).
\end{proof}

By weighted adjunction and the standard intersection formula for weighted projective spaces \cite{Dolgachev1982}, we have
\[
\omega_{X_{10}}^\vee
=
\OO_{X_{10}}((1+2+3+5)-10)
=
\OO_{X_{10}}(1),
\qquad
K_{X_{10}}^2=\frac{10}{1\cdot2\cdot3\cdot5}=\frac13.
\]
Thus a general \(X_{10}\subset\Pp(1,2,3,5)\) is a log del Pezzo surface in the Corti--Heuberger family \(X_{1,1/3}\); see \cite[Theorem~4 and Table~2]{Corti2016}. By \cite[Corollary~8(1)]{Corti2016}, every surface in \(X_{1,d}\) is obtained by blowing up \(25/3-d\) nonsingular points of \(\Pp(1,1,3)\); for \(d=1/3\), this gives eight blow-ups.

A chosen directed MMP in the one-point case may instead terminate at the auxiliary family \(B_{1,16/3}\). This does not conflict with the presentation above: \cite[Remark~9(1)]{Corti2016} states that if \(X\to B_{1,16/3}\) is the blow-up of a nonsingular point, then an alternative sequence of four contractions of floating \((-1)\)-curves starting from \(X\) ends at \(\Pp(1,1,3)\). The available \((-1)\)-curves therefore permit different contraction sequences, and the endpoint of one directed MMP does not obstruct the eight-blow-up presentation. For the general member, the Reid--Suzuki construction realizes this presentation with eight general smooth centers and identifies its anticanonical model.

\begin{theorem}[{Reid--Suzuki \cite[Section~1 and Table~1.1]{ReidSuzuki2004}}]
\label{thm:X10-anticanonical-model}
Let \(P_1,\dots,P_8\in \Pp(1,1,3)\) be eight general smooth points, and let
\[
\eta:S\longrightarrow \Pp(1,1,3)
\]
be their blow-up. Then
\[
-K_S=\eta^*(-K_{\Pp(1,1,3)})-\sum_{i=1}^8E_i,
\]
where \(E_i\) is the exceptional curve over \(P_i\). Here
\(-K_{\Pp(1,1,3)}\) is the anticanonical \(\mathbb Q\)-Cartier divisor, whose
associated rank-one reflexive sheaf is \(\OO_{\Pp(1,1,3)}(5)\). Moreover, the
anticanonical ring has the form
\[
R(S,-K_S)\cong
\frac{\C[x,y,z,w]}{(F_{10})},
\qquad
\deg(x,y,z,w)=(1,2,3,5),
\]
where \(F_{10}\) is weighted homogeneous of degree \(10\). Consequently, the anticanonical model of \(S\) is a degree-\(10\) hypersurface
\[
X_{10}\subset\Pp(1,2,3,5).
\]
The minimal resolution of \(X_{10}\) is obtained from \(\F_3\) by blowing up the eight corresponding points away from the negative section.
\end{theorem}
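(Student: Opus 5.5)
The statement is attributed to Reid--Suzuki, so the plan is to reconstruct the three assertions from the geometry of \(\Pp(1,1,3)\) and \(\F_3\): the canonical class formula, the presentation of the anticanonical ring, and the description of the minimal resolution. The last one I would treat first, since it is the most elementary. The blow-up centers \(P_i\) are smooth points of \(\Pp(1,1,3)\). Hence they lie in the locus where \(\tau:\F_3\to\Pp(1,1,3)\) is an isomorphism, that is, away from \(C_0\). Blowing up the preimages \(\tau^{-1}(P_i)\) on \(\F_3\) gives a smooth surface \(Y\) with a morphism \(Y\to S\) that contracts only the strict transform \(E\) of \(C_0\), which has \(E^2=-3\). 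Since \(E\) is not a \((-1)\)-curve, \(Y\to S\) is the minimal resolution of \(S\).

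The formula \(-K_S=\eta^*(-K_{\Pp(1,1,3)})-\sum E_i\) is the usual blow-up formula at smooth points, and it holds as \(\mathbb Q\)-Cartier divisors. The reflexive sheaf of \(-K_{\Pp(1,1,3)}\) is \(\OO(1+1+3)=\OO(5)\).

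For the anticanonical ring I would work on \(Y\) with the class \(L=-\sigma^*K_{\F_3}\)-type data. Concretely, \(-K_S\) pulls back to \(D=-K_Y-\tfrac13E\), using the discrepancy \(K_Y=\nu^*K_S-\tfrac13E\). Sections of \(m(-K_S)\) are sections on \(\Pp(1,1,3)\) of weighted degree \(5m\) vanishing to order \(m\) at each \(P_i\). Counting gives \(h^0(\OO(5m))\) minus \(8\binom{m+1}{2}\) conditions, and the generality of the points ensures these conditions are independent once \(m\ge1\). I would get the independence by combining Riemann--Roch on \(Y\) with Kawamata--Viehweg vanishing for the nef and big \(\mathbb Q\)-divisor \(D\). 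The resulting Hilbert series comes out as \((1-t^{10})/\bigl((1-t)(1-t^2)(1-t^3)(1-t^5)\bigr)\), and this is consistent with \(K_S^2=25/3-8=1/3=10/30\). The next step is to pick generators \(x,y,z,w\) in degrees \(1,2,3,5\) and show they generate the ring. Since the Hilbert series matches a single relation of degree \(10\), and the relation must exist because four generators cannot be algebraically independent on a surface, it then follows that \(R\cong\C[x,y,z,w]/(F_{10})\).

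The main obstacle is generation, i.e.\ surjectivity of the polynomial ring onto \(R\) in every degree. I would try a hyperplane-section argument. Take \(x\) to be the unique section of degree \(1\); its divisor is a curve \(C\in|-K_S|\), of arithmetic genus \(2\) by adjunction, passing through the singular point. Then \(R/(x)\) is the section ring of the degree-\(1/3\) orbifold divisor on \(C\). Vanishing of \(H^1(m(-K_S)-C)\) lifts generation from \(C\) to \(S\). Generation on the genus-\(2\) orbifold curve in degrees \(2,3,5\) is then a standard Reid-style computation, and for that computation I would defer to Reid--Suzuki's Table~1.1. Finally, \(-K_S\) is big and nef and ample away from the contracted curves, so \(\mathrm{Proj}\,R\) is the anticanonical model. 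This gives the hypersurface \(X_{10}\subset\Pp(1,2,3,5)\).
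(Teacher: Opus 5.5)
Your proposal takes essentially the paper's route. Both use the smooth-point blow-up formula with \(-K_{\Pp(1,1,3)}\) corresponding to \(\OO(5)\), lift the eight centers to \(\F_3\) away from \(C_0\) to get the resolution, and appeal to Reid--Suzuki's Table~1.1 for the decisive generation step. Your Hilbert-series count is a correct extra check that the paper omits: for example, \(h^0(\OO(5m))-8\binom{m+1}{2}=1,2,3,4,6\) for \(m=1,\dots,5\), which matches \((1-t^{10})/\bigl((1-t)(1-t^2)(1-t^3)(1-t^5)\bigr)\).

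One side claim needs correcting. It is harmless only because you defer generation exactly as the paper does. The curve \(C=\{x=0\}\) has arithmetic genus \(1\), not \(2\). Its strict transform on \(Y\) is \(C'=\sigma^*(C_0+5f)-\sum D_i\sim -K_Y-E\), with \(C'^2=K_Y\cdot C'=-1\) and \(C'\cdot E=2\). So \(C\) is a rational curve with two smooth branches through \(p\): in the orbifold chart its equation is \(g_2(x,y)+f_5(x,y)=0\), which is a node. Hence \(C\) is not quasi-smooth at \(p\), and the reduction to a ``genus-\(2\) orbifold curve'' would not go through as written.

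Also, like the paper, you use that \(-K_S\) is ample for general points. This is what identifies \(Y\) with the minimal resolution of \(X_{10}\) rather than just of \(S\), and it too is imported from Reid--Suzuki rather than proved.
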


\begin{proof}
By the standard formula for weighted projective spaces, the anticanonical
\(\mathbb Q\)-Cartier divisor corresponds to the rank-one reflexive sheaf
\(\OO_{\Pp(1,1,3)}(5)\), and
\[
K_{\Pp(1,1,3)}^2=\frac{25}{3}.
\]
Since the points \(P_i\) are smooth, the blow-up formula gives the asserted
expression for \(-K_S\), and
\[
K_S^2=\frac{25}{3}-8=\frac13.
\]

In the cited construction, Reid--Suzuki describe the cascade obtained by
blowing up general smooth points of \(\Pp(1,1,3)\); for eight blow-ups, their
Table~1.1 records the anticanonical model as
\[
S_{10}\subset\Pp(1,2,3,5);
\]
its homogeneous coordinate ring, with the anticanonical grading, is precisely
the graded ring displayed in the statement.

Finally, the minimal resolution
\(\tau:\F_3\to\Pp(1,1,3)\) is an isomorphism over the smooth locus. Since all
eight centers lie in that locus, they lift uniquely to points
\(q_1,\dots,q_8\in\F_3\) away from the negative section \(C_0\). Hence the
minimal resolution is the blow-up of \(\F_3\) at these eight points.
\end{proof}

Let
\[
\sigma:\widetilde X_{10}\longrightarrow \F_3
\]
be the blow-down morphism of \cref{thm:X10-anticanonical-model}, let
\(D_1,\dots,D_8\) be its exceptional curves, and let \(f\) be the fiber class
on \(\F_3\). Starting from the standard full exceptional collection
\[
\left\langle
\OO,\OO(f),\OO(C_0+3f),\OO(C_0+4f)
\right\rangle
\]
and applying Orlov's blow-up formula gives
\[
\left\langle
\OO_{D_1}(-1),\dots,\OO_{D_8}(-1),
\sigma^*\OO,\sigma^*\OO(f),
\sigma^*\OO(C_0+3f),\sigma^*\OO(C_0+4f)
\right\rangle
\]
on \(\widetilde X_{10}\). The curves \(D_i\) are pairwise disjoint, so their exceptional objects may be listed in any order. Hence \cref{thm:general-main} places the local object \(\mathcal E_p\) before the image of this collection and gives the following
consequence.

\begin{corollary}\label{cor:X10-stack-fec} 
Let \(\pi:\mathcal X_{10}\to X_{10}\) be the canonical stack. Then \(\Db(\coh\mathcal X_{10})\) has a full exceptional collection of length \(13\). 
\end{corollary}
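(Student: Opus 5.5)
The corollary follows by specializing \cref{thm:general-fec} and \cref{cor:length} to \(X_{10}\); the only inputs needed are the number and type of singular points and the anticanonical degree. By \cref{prop:X10-quasismooth}, a general \(X_{10}\) has exactly one singular point \(p=[0:0:1:0]\), of type \(\frac13(1,1)\), so \(r=1\). By weighted adjunction, \(X_{10}\) is a log del Pezzo surface with
\[
K_{X_{10}}^2=\frac{10}{30}=\frac13.
\]
Hence the hypotheses of \cref{thm:general-fec} hold. The formula of \cref{cor:length} then gives length
\[
12-\frac13+\frac43=13.
\]

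To make the collection explicit, I would use the geometric input of \cref{thm:X10-anticanonical-model}. It identifies \(\widetilde X_{10}\) with the blow-up \(\sigma\colon\widetilde X_{10}\to\F_3\) at eight points away from \(C_0\). Start from the four-term collection \(\langle \OO,\OO(f),\OO(C_0+3f),\OO(C_0+4f)\rangle\) on \(\F_3\) and apply \cref{thm:orlov-blow-up} eight times. Pulling back the earlier exceptional objects through the later blow-ups gives the displayed collection of length \(12\) on \(\widetilde X_{10}\). This matches \(\rk K_0(\widetilde X_{10})=12-\frac13+\frac13=12\), as a consistency check.

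Since the eight blow-up centers are general, the \(D_i\) (strict transforms included) are pairwise disjoint. Therefore their objects are mutually orthogonal and can be listed in any order. Applying the fully faithful functor \(\Phi\) of \cref{thm:ishii-ueda-global} and placing \(\mathcal E_p\) first, \cref{thm:general-fec} shows that \(\langle\mathcal E_p,\Phi(G_8),\dots,\Phi(G_1),\Phi\sigma^*\OO,\dots\rangle\) is full and exceptional. It has \(1+8+4=13\) terms.

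The only point needing care is that \(\widetilde X_{10}\) really is the minimal resolution, so that \(\Phi\) is defined on it. This holds because the unique exceptional curve, the strict transform of \(C_0\), has self-intersection \(-3\) and hence contains no \((-1)\)-curve. Apart from that, the argument is bookkeeping.
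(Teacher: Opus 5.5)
Your proposal is correct and follows essentially the paper's route. The paper also pulls the four-term collection on $\F_3$ back through the eight Orlov blow-ups given by the Reid--Suzuki description of $\widetilde X_{10}$, uses the disjointness of the $D_i$ to reorder them, and invokes \cref{thm:general-main} to prepend $\mathcal E_p$, giving $1+8+4=13$ objects; your length check via \cref{cor:length} ($12-\tfrac13+\tfrac43=13$) and your remark that the $(-3)$-curve makes the resolution minimal are harmless supplements.
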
 

More generally, the Hille--Perling toric systems on \(\F_3\) induce the
following family of full exceptional collections on \(\widetilde X_{10}\)
\cite[Proposition~5.2]{HillePerling2011}:
\[
\begin{aligned}
\big\langle &\OO_{D_1}(-1),\dots,\OO_{D_8}(-1), \sigma^*\OO,\sigma^*\OO(f),\\ 
&\sigma^*\OO(C_0+(s+4)f),\sigma^*\OO(C_0+(s+5)f) \big\rangle,
\end{aligned}
\]
for all $s\in\Z$. The underlying four-term Hille–Perling collection on \(\F_3\) is strongly exceptional for \(s\geq -1\).

\begin{proposition}[The Euler form for \(\mathcal X_{10}\)]
\label{prop:X10-euler-form}
Let \(s\in\Z\) and let
\[
\begin{aligned}
\mathcal C(s)=
\big\langle
&\mathcal E_p,\,
\Phi(\OO_{D_1}(-1)),\ldots,\Phi(\OO_{D_8}(-1)),\,
\Phi(\sigma^*\OO),\,\Phi(\sigma^*\OO(f)),\\
&\Phi(\sigma^*\OO(C_0+(s+4)f)),\,
\Phi(\sigma^*\OO(C_0+(s+5)f))
\big\rangle 
\end{aligned}
\]
be an exceptional collection of \(D^b(\coh \mathcal X_{10})\) associated with the Hille--Perling toric system for \(\F_3\). Then the matrix of the Euler form \(S_{ij}(s)=\chi(\mathcal C_i(s),\mathcal C_j(s))\)
is given by
\[
\begingroup
\setlength{\arraycolsep}{6pt}
\renewcommand{\arraystretch}{1.15}
S_{\mathcal X_{10}}(s)
=
\left(
\begin{array}{c|c|c}
1
&
0_{1\times 8}
&
\begin{matrix}
0&-1&-(s+1)&-(s+2)
\end{matrix}
\\ \hline
0_{8\times 1}
&
I_8
&
-\mathbf 1_{8\times 4}
\\ \hline
0_{4\times 1}
&
0_{4\times 8}
&
\begin{matrix}
1&2&2s+7&2s+9\\
0&1&2s+5&2s+7\\
0&0&1&2\\
0&0&0&1
\end{matrix}
\end{array}
\right).
\endgroup
\]

Equivalently, with respect to the ordered collection \(\mathcal C(s)\), the fully expanded matrix is
\[
\begingroup
\setlength{\arraycolsep}{3pt}
\renewcommand{\arraystretch}{1.08}
S_{\mathcal X_{10}}(s)
=
\left(
\begin{array}{c|cccccccc|cccc}
1
&0&0&0&0&0&0&0&0
&0&-1&-(s+1)&-(s+2)\\ \hline
0&1&0&0&0&0&0&0&0&-1&-1&-1&-1\\
0&0&1&0&0&0&0&0&0&-1&-1&-1&-1\\
0&0&0&1&0&0&0&0&0&-1&-1&-1&-1\\
0&0&0&0&1&0&0&0&0&-1&-1&-1&-1\\
0&0&0&0&0&1&0&0&0&-1&-1&-1&-1\\
0&0&0&0&0&0&1&0&0&-1&-1&-1&-1\\
0&0&0&0&0&0&0&1&0&-1&-1&-1&-1\\
0&0&0&0&0&0&0&0&1&-1&-1&-1&-1\\ \hline
0&0&0&0&0&0&0&0&0&1&2&2s+7&2s+9\\
0&0&0&0&0&0&0&0&0&0&1&2s+5&2s+7\\
0&0&0&0&0&0&0&0&0&0&0&1&2\\
0&0&0&0&0&0&0&0&0&0&0&0&1
\end{array}
\right).
\endgroup
\]
\end{proposition}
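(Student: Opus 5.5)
The matrix will be computed block by block. Three inputs do most of the work: the fact that \(\Phi\) is fully faithful (\cref{thm:ishii-ueda-global}), semiorthogonality of \(\mathcal C(s)\), and Riemann--Roch on \(\widetilde X_{10}\) and \(\F_3\).

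\emph{Zero entries.} Every entry below the diagonal vanishes because \(\mathcal C(s)\) is exceptional, so \(\Ext^\bullet(\mathcal C_j,\mathcal C_i)=0\) for \(j>i\). The diagonal entries are \(1\) because each object is exceptional.

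\emph{The \(12\times 12\) block indexed by the \(\Phi\)-objects.} Since \(\Phi\) is fully faithful, \(\chi(\Phi A,\Phi B)=\chi_{\widetilde X_{10}}(A,B)\) for all \(A,B\). The remaining entries of this block are then computed as follows.
\begin{itemize}
\item[(i)] For \(i\neq j\), the curves \(D_i,D_j\) are disjoint. Hence \(\chi(\OO_{D_i}(-1),\OO_{D_j}(-1))=0\), and the middle block is \(I_8\).
\item[(ii)] For a line bundle \(L\) pulled back from \(\F_3\), Serre duality gives
\[
\chi(\OO_{D}(-1),L)=\chi(L,\OO_D(-1)\otimes\omega)[\text{sign } (+)]=\chi(\OO_D(-2)).
\]
Here we use \(K\cdot D=-1\) and \(L\cdot D=0\). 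Thus \(\chi(\OO_D(-1),L)=\chi(\OO_{\Pp^1}(-2))=-1\), which produces the block \(-\mathbf 1_{8\times4}\).
\item[(iii)] The entries among pullbacks satisfy \(\chi(\sigma^*L,\sigma^*M)=\chi_{\F_3}(M\otimes L^{-1})\), because \(R\sigma_*\OO=\OO\). Riemann--Roch on \(\F_3\) gives \(\chi(\OO(D))=1+\tfrac12(D^2-D\cdot K)\), with \(K_{\F_3}=-2C_0-5f\). For example, \(D=C_0+nf\) gives \(D^2=2n-3\), \(D\cdot K=1-2n\) and \(\chi=2n-1\). Taking \(n=s+4\) and \(n=s+3\) yields \(2s+7\) and \(2s+5\). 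The differences \(f\) and \(C_0+nf\) give the remaining entries \(2\) and \(2s+9\).
\end{itemize}

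\emph{The first row.} This row is \(\lambda(G):=\chi(\mathcal E_p,\Phi(G))\). The function \(\lambda\) is additive on \(K_0(\widetilde X_{10})\). It vanishes on every class supported away from the exceptional curve \(E\) (the strict transform of \(C_0\)). This holds because \(\Phi\) is an isomorphism over \(X\setminus\{p\}\), so such objects go to objects with support disjoint from the residual gerbe over \(p\). In particular \(\lambda(\OO_{D_i}(-1))=0\) and \(\lambda(\OO_x)=0\) for points \(x\notin E\).

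I then want to show that \(\lambda(\OO(M))=-(M\cdot E)\) for line bundles \(M\). Write the class of \(\OO(M)\) in \(K_0\) as \([\OO]+\) a first Chern class term \(+\) a point term. The point term can be moved off \(E\), and \(c_1\) can be represented by curves meeting \(E\) transversally. Then \(\lambda\) restricted to line bundles is \(\lambda(\OO)\) plus a homomorphism on \(\Pic\) that factors through the pairing \(M\mapsto M\cdot E\). Concretely, \(\lambda(\OO(M))=\lambda(\OO)+c\,(M\cdot E)\) for some constant \(c\). So it suffices to compute \(\lambda(\OO)\) and \(\lambda(\OO(f))\) locally on \([\A^2/\muthree]\).

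There \(\Phi(\OO)=\OO_{\XX}\). A Koszul computation gives \(\RHom(\OO_0\otimes\rho_2,\OO)=\rho_2^\vee\otimes\det(V)[-2]=\rho_{-2}\otimes\rho_2[-2]\), whose invariant part is \(\C[-2]\)? If instead the weight does not match, the invariant part is \(0\). This sign and weight bookkeeping is the main obstacle. The target values are \(\lambda(\OO)=0\) and \(\lambda(\OO(f))=-1\), with \(f\cdot E=1\).

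To pin this down, I would use the Ishii--Ueda description of \(\Phi\) on line bundles near \(E\): \(\Phi(\OO(f))\) is locally the full-sheaf module of degree one along \(E\), namely \(\OO_{\XX}\otimes\rho_1\) up to convention. I would then check \(\chi(\OO_0\otimes\rho_2,\OO\otimes\rho_1)=-1\) via the Koszul complex, with the \(\muthree\)-weights of \(V=\rho_1^{\oplus2}\). Granting this, we get \(c=-1\). Since \((C_0+nf)\cdot C_0=n-3\), the last two entries of the first row are \(-(s+1)\) and \(-(s+2)\). The first-row entries against \(\Phi(\OO_{D_i}(-1))\) are \(0\), which completes the matrix.
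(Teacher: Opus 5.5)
\textbf{Verdict: genuine gap in the first row.} Your twelve $\Phi$-object block is correct and essentially matches the paper. It rests on full faithfulness of $\Phi$, on $\chi(\OO_{D_i}(-1),\sigma^*L)=-1$ (your Serre-duality step is equivalent to the paper's divisor-duality computation), and on Riemann--Roch on $\F_3$. The zeros below the diagonal follow from exceptionality. Your reduction of the first row is a workable alternative in outline. The function $\lambda(G)=\chi(\mathcal E_p,\Phi G)$ is additive and vanishes on classes supported off $E$. It is therefore affine-linear on $\Pic(\widetilde X_{10})$, since the quadratic correction is a multiple of the point class, which you may place off $E$. The transversality remark does not by itself show that the linear part factors through $M\mapsto M\cdot E$. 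This is easily fixed: $E^\perp$ is generated by $\sigma^*(C_0+3f)$ and the $D_i$, all of which are represented by curves disjoint from $E$.

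The gap is the local input, which is the whole content of the first row, and the route you sketch cannot succeed.

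\emph{The proposed check for $\lambda(\OO(f))$ fails.} For every line bundle $\OO\otimes\rho_j$ on $[\A^2/\muthree]$, the Koszul complex shows that $\RHom(\OO_0\otimes\rho_2,\OO\otimes\rho_j)$ is concentrated in degree $2$ and has dimension at most one. Its Euler characteristic is therefore $0$ or $1$. So the check $\chi(\OO_0\otimes\rho_2,\OO\otimes\rho_1)=-1$ fails in every convention. Since the correct value is $-1$, $\Phi(\OO(f))$ is not a line bundle near the gerbe; identifying it with a full-sheaf line bundle is the error. Concretely, in the $G$-Hilbert-scheme description of $\Phi$ (universal family $\Bl_0\A^2$), $\Phi(\OO)=\OO$. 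Near the gerbe, $\Phi(\OO(f))$ is the twisted ideal $\mathfrak m\otimes\rho_2$ of the origin. Its class $[\OO\otimes\rho_2]-[\OO_0\otimes\rho_2]$ pairs with $\mathcal E_p$ to $0-1=-1$.

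\emph{The weight in your computation of $\lambda(\OO)$ is wrong.} $\Ext^2(\OO_0,\OO)$ is the determinant of the \emph{tangent} representation. In the labeling where $\rho_2$ is non-special, the coordinate functions have weight $\rho_1$, so this determinant is $\rho_1$, not $\rho_2$. Hence $\RHom_{\A^2}(\OO_0,\OO)\otimes\rho_2^\vee\cong\rho_2[-2]$ has no invariants, and $\lambda(\OO)=0$. As you wrote it, the invariant part would be $\C[-2]$, giving $\lambda(\OO)=1$ and contradicting your own target.

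\emph{How the paper avoids this.} It uses the left adjoint $\Psi$ of $\Phi$ together with the Gugiatti--Rota identification $\Psi(\mathcal E_p)\simeq\OO_E(-2)$. Then $\chi(\mathcal E_p,\Phi A)=\chi(\OO_E(-2),A)$. Grothendieck duality on $E$ gives $\chi(\OO_E(-2),\sigma^*\OO(D))=-D\cdot C_0$ for all line bundles at once, and the pairing with each $\OO_{D_i}(-1)$ is zero by disjointness.
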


\begin{proof}
Let
\[
\Psi:D^b(\coh \mathcal X_{10})\longrightarrow D^b(\coh \widetilde X_{10})
\]
be the left adjoint of the Ishii--Ueda functor
\[
\Phi:D^b(\coh \widetilde X_{10})\longrightarrow D^b(\coh \mathcal X_{10}).
\]
We first compute the entries involving the residual-gerbe object \(\mathcal E_p\). Locally at the singular point \(p\), the stack is \([\A^2/\muthree]\), and \(\mathcal E_p\) is represented by \(O_0\otimes\rho_2\), where \(\rho_2\) is the unique non-special representation. We use the left-adjoint computation of Gugiatti--Rota \cite[Theorem~3.1 and Example~3.5]{GugiattiRota2023}. In the case \(\frac13(1,1)\), the natural representation is
\[
\rho_{\mathrm{nat}}=\rho_1\oplus\rho_1,
\qquad
\rho_{\det}=\bigwedge^2\rho_{\mathrm{nat}}=\rho_2.
\]
The minimal resolution has a single exceptional curve \(E\), with \(E^2=-3\), and the special representations are \(\rho_0\) and \(\rho_1\). Since
\[
\rho_2\otimes\rho_{\det}=\rho_2\otimes\rho_2=\rho_1
\]
holds, the cited formula gives
\[
\Psi(\mathcal E_p)\simeq \OO_E(-2).
\]
Equivalently, \cite[Example~3.5]{GugiattiRota2023} states directly, in its
local notation, that for a cyclic quotient singularity of type
\(\frac1n(1,1)\),
\[
\psi(\OO_0\otimes\rho_{n-1})\simeq \OO_E(-n+1).
\]
Taking \(n=3\) gives the displayed identification
\(\Psi(\mathcal E_p)\simeq\OO_E(-2)\), with no cohomological shift.
Therefore, for every \(A\in D^b(\coh\widetilde X_{10})\), adjunction gives
\[
\chi_{\mathcal X_{10}}(\mathcal E_p,\Phi A)
=
\chi_{\widetilde X_{10}}(\OO_E(-2),A).
\]
On the other hand, the Ishii--Ueda semiorthogonal decomposition has the form
\[
D^b(\coh\mathcal X_{10})
=
\left\langle
\mathcal E_p,\Phi(D^b(\coh\widetilde X_{10}))
\right\rangle,
\]
and we use here the ordering of \cite[Theorem~1.4]{Ishii2015}, in which the
exceptional complement lies to the left of
\(\Phi(D^b(\coh\widetilde X_{10}))\). Thus
\[
\chi_{\mathcal X_{10}}(\Phi A,\mathcal E_p)=0.
\]
Only the first row has to be computed.

The curve \(E\) is the strict transform of the negative section \(C_0\subset\F_3\), while the curves \(D_1,\dots,D_8\) are disjoint from \(E\). Hence
\[
\chi_{\widetilde X_{10}}(\OO_E(-2),\OO_{D_i}(-1))=0
\]
for all \(i\). Now let \(L=\OO_{\F_3}(D)\) be a line bundle on \(\F_3\). Since \(E\) is identified with \(C_0\) under \(\sigma\), we have
\[
\deg\big((\sigma^*L)|_E\big)=D\cdot C_0.
\]
For the inclusion \(i:E\hookrightarrow\widetilde X_{10}\), Grothendieck duality for an effective Cartier divisor gives
\[
i^!(\sigma^*L)\simeq(\sigma^*L)|_E\otimes\OO_E(E)[-1].
\]
Since \(\OO_E(E)\simeq\OO_E(-3)\), we obtain
\[
\RHom_{\widetilde X_{10}}(\OO_E(-2),\sigma^*L)
\simeq
\mathrm R\Gamma\bigl(E,\OO_E(D\cdot C_0-1)\bigr)[-1].
\]
Therefore, we have
\[
\chi_{\widetilde X_{10}}(\OO_E(-2),\sigma^*L)
=
-\chi\bigl(\OO_{\Pp^1}(D\cdot C_0-1)\bigr)
=
-D\cdot C_0.
\]
For the four line bundles
\[
\OO,\qquad \OO(f),\qquad \OO(C_0+(s+4)f),\qquad \OO(C_0+(s+5)f),
\]
the intersection numbers with \(C_0\) are
\[
0,\qquad 1,\qquad s+1,\qquad s+2.
\]
This gives the first row of the matrix.

We now compute the block coming from the resolution. Since \(\Phi\) is fully faithful, we have
\[
\chi_{\mathcal X_{10}}(\Phi A,\Phi B)
=
\chi_{\widetilde X_{10}}(A,B)
\]
for all \(A,B\in D^b(\coh\widetilde X_{10})\). The curves \(D_i\) are pairwise disjoint exceptional curves of point blow-ups, so we obtain
\[
\chi(\OO_{D_i}(-1),\OO_{D_j}(-1))=\delta_{ij}.
\]
If \(L\) is pulled back from \(\F_3\), then \((\sigma^*L)|_{D_i}\simeq\OO_{D_i}\). Using \(D_i^2=-1\) and the same divisor-duality computation, we get
\[
\RHom_{\widetilde X_{10}}(\OO_{D_i}(-1),\sigma^*L)
\simeq
\mathrm R\Gamma(D_i,\OO_{D_i})[-1],
\]
which implies
\[
\chi(\OO_{D_i}(-1),\sigma^*L)=-1.
\]
In the opposite direction, we have
\[
\RHom_{\widetilde X_{10}}(\sigma^*L,\OO_{D_i}(-1))
\simeq
\mathrm R\Gamma(D_i,\OO_{D_i}(-1)),
\]
so we get
\[
\chi(\sigma^*L,\OO_{D_i}(-1))=0.
\]
This gives the blocks \(I_8\), \(-\mathbf 1_{8\times4}\), and \(0_{4\times8}\).

It remains to compute the \(4\times4\) block coming from \(\F_3\). We use
\[
C_0^2=-3,\qquad C_0\cdot f=1,\qquad f^2=0,\qquad K_{\F_3}=-2C_0-5f.
\]
For line bundles \(L_1=\OO(A_1)\) and \(L_2=\OO(A_2)\) on \(\F_3\), Riemann--Roch gives
\[
\chi(L_1,L_2)
=
\chi(\OO(A_2-A_1))
=
1+\frac12(A_2-A_1)\cdot(A_2-A_1-K_{\F_3}).
\]
Applying this to
\[
\OO,\qquad
\OO(f),\qquad
\OO(C_0+(s+4)f),\qquad
\OO(C_0+(s+5)f),
\]
we obtain
\[
\begin{pmatrix}
1&2&2s+7&2s+9\\
0&1&2s+5&2s+7\\
0&0&1&2\\
0&0&0&1
\end{pmatrix}.
\]
Putting the four blocks together gives the stated matrix.
\end{proof}

We finally apply the KKS descent construction of \cref{sec:kks} to the
singular surface \(X_{10}\); the required compatibility after the eight
blow-ups is verified in the proof below.

\begin{theorem}
\label{thm:X10-kks-sod}
The singular surface \(X_{10}\) admits a semiorthogonal decomposition
\[
\Db(\coh X_{10})
=
\left\langle
\Db(K(3,1)\text{-mod}),F_1,\dots,F_{10}
\right\rangle,
\]
where \(F_1,\dots,F_{10}\) are exceptional objects and
\[
K(3,1)\cong \C[z_1,z_2]/(z_1,z_2)^2.
\]
\end{theorem}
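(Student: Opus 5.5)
The plan is to apply the KKS descent theorem \cite[Theorem~2.12]{KKS2020} to the minimal resolution \(\nu:\widetilde X_{10}\to X_{10}\), using a full exceptional collection on \(\widetilde X_{10}\) obtained by pulling back the KKS collection on \(\F_3\) and adding the Orlov objects of the eight blow-ups. Concretely, starting from
\[
\Db(\coh\F_3)=\bigl\langle \OO(-C_0-f),\OO(-f),\OO,\OO(C_0+3f)\bigr\rangle
\]
and applying \cref{thm:orlov-blow-up} eight times, I will write
\[
\Db(\coh\widetilde X_{10})
=\bigl\langle \widetilde{\mathcal A}_0,\ \OO_{D_1}(-1),\dots,\OO_{D_8}(-1),\ \sigma^*\OO,\ \sigma^*\OO(C_0+3f)\bigr\rangle,
\qquad
\widetilde{\mathcal A}_0=\sigma^*\bigl\langle \OO(-C_0-f),\OO(-f)\bigr\rangle .
\]
To get \(\widetilde{\mathcal A}_0\) on the far left I first use Orlov's formula in the form \(\langle \OO_{D_i}(-1),\beta^*(\cdot)\rangle\) and then mutate the whole block \(\sigma^*\langle\OO(-C_0-f),\OO(-f)\rangle\) to the left past the \(\OO_{D_i}(-1)\). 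Since \(\RHom(\sigma^*L,\OO_{D_i}(-1))=R\Gamma(\OO_{D_i}(-1))=0\), this block is already left-orthogonal to the Orlov objects in the required direction. Equivalently, I can use the Orlov decomposition \(\langle\beta^*\Db(S),\OO_E\rangle\) on the other side. Either way the mutation does not change the block, and the outcome is a full decomposition with one two-object block followed by \(8+2=10\) exceptional objects.

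Next I will verify the hypotheses of KKS. For compatibility, the only exceptional curve of \(\nu\) is \(E\), the strict transform of \(C_0\). All blow-up centres lie off \(C_0\), so iterating \cref{lem:kks-blowups} gives \(\OO_E(-1)\simeq\sigma^*\OO_{C_0}(-1)\). The exact sequence \(0\to\OO(-C_0-f)\to\OO(-f)\to\OO_{C_0}(-1)\to0\) pulls back to show that \(\OO_E(-1)\in\widetilde{\mathcal A}_0\). For the remaining components, KKS requires them to lie in \(\nu^*\)-type subcategories, namely to be orthogonal to \(\OO_E(-1)\) in the appropriate sense. This holds because the objects \(\OO_{D_i}(-1)\) have support disjoint from \(E\), while \(\sigma^*\OO\) and \(\sigma^*\OO(C_0+3f)\) have degree \(0\) on \(E\) and so are trivial on \(E\). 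The descended components are then \(\nu_*\) of each piece. The ten one-object components descend to exceptional objects \(F_j=\nu_*(\cdot)\), because they are disjoint from or trivial along \(E\), and \(\nu_*\) is fully faithful on them by rationality of the singularity (\(R\nu_*\OO=\OO\)).

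For the first component, adherence is local near \(E\). The block \(\widetilde{\mathcal A}_0\) coincides with the pullback of the \(\F_3\) block, and \(\sigma\) is an isomorphism in a neighbourhood of \(E\). So the untwisted adherence of \cite[Definition~3.6, Example~3.17]{KKS2020} transfers from \(\F_3\to\Pp(1,1,3)\). The Brauer obstruction vanishes because \(\Br(X_{10})=0\), as shown in \cref{sec:kks}. Therefore the descended component is equivalent to \(\Db(K(3,1)\text{-mod})\), with \(K(3,1)\cong\C[z_1,z_2]/(z_1,z_2)^2\) by \cite[Example~3.14]{KKS2020}.

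I expect the main obstacle to be the precise form of the KKS hypotheses. One issue is whether compatibility and adherence must hold for the whole decomposition, or whether the adherent block has to be in a specific position; I have placed it first, as in the \(\F_3\) example. The other is to check that the reordering and mutation of the Orlov objects really yields a valid semiorthogonal decomposition. I would check the orthogonality \(\RHom(\OO_{D_i}(-1),\sigma^*L)\) in both directions using the duality computation from \cref{prop:X10-euler-form}, and if necessary replace the \(\OO_{D_i}(-1)\) by their mutations \(\OO_{D_i}\), which are still supported away from \(E\).
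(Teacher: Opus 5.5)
Your overall route is the paper's: KKS descent along \(\nu\), compatibility from \cref{lem:kks-blowups} and the sequence \(0\to\OO(-C_0-f)\to\OO(-f)\to\OO_{C_0}(-1)\to0\), adherence checked locally near \(E\), and ten exceptional components. However, the decomposition of \(\Db(\coh\widetilde X_{10})\) that you feed into KKS is not semiorthogonal.

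In your displayed order the objects \(\OO_{D_i}(-1)\) lie to the right of \(\sigma^*\langle\OO(-C_0-f),\OO(-f)\rangle\), so you need \(\RHom(\OO_{D_i}(-1),\sigma^*L)=0\). You checked the opposite direction, \(\RHom(\sigma^*L,\OO_{D_i}(-1))=0\). That vanishing only expresses the semiorthogonality already built into Orlov's order \(\langle\OO_{D_i}(-1),\sigma^*(\cdot)\rangle\). The vanishing you actually need fails; by the duality computation in \cref{prop:X10-euler-form},
\[
\RHom(\OO_{D_i}(-1),\sigma^*L)\simeq\mathrm R\Gamma(D_i,\OO_{D_i})[-1]=\C[-1].
\]
So the left mutation does change the block. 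The cone of \(\OO_{D_i}(-1)[-1]\to\sigma^*L\) is the nonsplit extension \(0\to\sigma^*L\to\sigma^*L(D_i)\to\OO_{D_i}(-1)\to0\). Mutating past all eight Orlov objects therefore replaces the block by \(\sigma^*\langle\OO(-C_0-f),\OO(-f)\rangle\otimes\OO(D_1+\cdots+D_8)\).

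Your fallback of using \(\OO_{D_i}\) in the same position does not rescue the displayed order. Then \(\OO_{D_i}\) sits to the left of \(\sigma^*\OO\) and \(\sigma^*\OO(C_0+3f)\), which requires \(\RHom(\sigma^*M,\OO_{D_i})=0\). But this equals \(\mathrm R\Gamma(\OO_{D_i})=\C\). With untwisted pullbacks, neither \(\OO_{D_i}(-1)\) nor \(\OO_{D_i}\) can sit between the adherent block and the two remaining line bundles.

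The repair is straightforward. Your remark that one may use the right-handed Orlov form \(\langle\beta^*\Db(\coh S),\OO_D\rangle\) is exactly what the paper does, but only if nothing is reordered, so that the Orlov objects stay at the far right:
\[
\Db(\coh\widetilde X_{10})=\bigl\langle\sigma^*\widetilde{\mathcal A}_0,\sigma^*\widetilde{\mathcal A}_1,\sigma^*\widetilde{\mathcal A}_2,\OO_{D_1},\dots,\OO_{D_8}\bigr\rangle .
\]
This is semiorthogonal because \(\RHom(\OO_{D_i},\sigma^*M)\simeq\mathrm R\Gamma(D_i,\OO_{D_i}(-1))[-1]=0\). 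It puts the adherent block first followed by ten exceptional components, as the statement requires.

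Alternatively, you may keep your order if the first block is the genuinely mutated one, \(\sigma^*\widetilde{\mathcal A}_0\otimes\OO(\sum_i D_i)\). Since \(\OO(\sum_i D_i)\) is trivial near \(E\), this block still contains \(\OO_E(-1)\). Its generators \(L_j'=\sigma^*L_j(\sum_iD_i)\) still satisfy \(L_1'=L_0'(E)\) and \(L_0'\cdot E=2\), the conditions of \cite[Lemma~3.5 and Definition~3.6]{KKS2020}.

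With either fix, the rest of your argument goes through. The appeal to \(\Br(X_{10})=0\) is not needed as an input, because untwisted adherence is verified directly from these line-bundle conditions.
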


\begin{proof}
We start with the decomposition on \(\F_3\) from
\cite[Example~3.17]{KKS2020}:
\[
\Db(\coh \F_3)
=
\left\langle
\widetilde{\mathcal A}_0,
\widetilde{\mathcal A}_1,
\widetilde{\mathcal A}_2
\right\rangle,
\]
where
\[
\widetilde{\mathcal A}_0
=
\left\langle
\OO(-C_0-f),\OO(-f)
\right\rangle,
\qquad
\widetilde{\mathcal A}_1=\langle \OO\rangle,
\qquad
\widetilde{\mathcal A}_2=\langle \OO(C_0+3f)\rangle .
\]
As explained in \cref{sec:kks}, the exact sequence
\[
0
\longrightarrow
\OO(-C_0-f)
\longrightarrow
\OO(-f)
\longrightarrow
\OO_{C_0}(-1)
\longrightarrow
0
\]
shows that \(\OO_{C_0}(-1)\in \widetilde{\mathcal A}_0\).

By \cref{thm:X10-anticanonical-model}, the minimal resolution \(\widetilde X_{10}\) is obtained from \(\F_3\) by eight blow-ups whose centers are disjoint from \(C_0\) and its successive strict transforms. Let
\[
\sigma:\widetilde X_{10}\longrightarrow \F_3
\]
be the composition of these blow-ups, and let \(E\subset \widetilde X_{10}\)
be the strict transform of \(C_0\). For a blow-up \(\beta:S'\to S\) with
exceptional curve \(D\), the right-handed form of
\cref{thm:orlov-blow-up}, obtained by the standard mutation, is
\[
\Db(\coh S')
=
\left\langle
\beta^*\Db(\coh S),\OO_D
\right\rangle.
\]
Iterating this form gives a semiorthogonal decomposition of
\(\Db(\coh \widetilde X_{10})\) in which the eight Orlov exceptional
components follow the three pulled-back blocks. Set
\[
L_0=\OO(-C_0-f),
\qquad
L_1=\OO(-f).
\]
The morphism \(\sigma\) is an isomorphism in a neighborhood of \(E\). Since
the blow-up centers are disjoint from \(C_0\), the relation
\(L_1=L_0(C_0)\) pulls back to
\[
\sigma^*L_1=\sigma^*L_0(E),
\]
and
\[
(\sigma^*L_0)\cdot E
=
L_0\cdot C_0
=
(-C_0-f)\cdot C_0
=3-1
=2.
\]
These are precisely the equivalent line-bundle conditions of
\cite[Lemma~3.5 and Definition~3.6]{KKS2020} for untwisted adherence to the
single \((-3)\)-curve \(E\). Moreover, \cref{lem:kks-blowups} gives
\[
\OO_E(-1)\in \sigma^*\widetilde{\mathcal A}_0 .
\]
Since \(E\) is the only exceptional curve of the contraction
\[
\nu:\widetilde X_{10}\longrightarrow X_{10},
\]
this is exactly the compatibility condition of
\cite[Definition~2.7]{KKS2020}.

By \cite[Theorem~2.12]{KKS2020}, the decomposition descends to a
semiorthogonal decomposition of \(\Db(\coh X_{10})\). The untwisted adherent
component descending from \(\sigma^*\widetilde{\mathcal A}_0\) is equivalent
to \(\Db(K(3,1)\text{-mod})\) by \cite[Theorem~3.16]{KKS2020}.

The remaining components consist, in this order, of the two exceptional components descending from \(\widetilde{\mathcal A}_1\) and \(\widetilde{\mathcal A}_2\), followed by the eight Orlov components from the blow-ups. Relabeling these ten exceptional components as \(F_1,\dots,F_{10}\) gives the stated semiorthogonal decomposition.
\end{proof}

\section{A general consequence and further directions}
\label{sec:further-directions}

We finish with a general formal consequence of the Ishii--Ueda decomposition and rationality of the minimal resolution, and then indicate possible extensions of the cascade method. The existence assertion below applies to arbitrary quotient singularities; under the additional hypothesis of singularities of type \(\frac1k(1,1)\), the local contribution and the length can be computed explicitly.

\begin{proposition}\label{prop:general-log-del-pezzo-stack}
Let \(X\) be a log del Pezzo surface, and let \(\XX\) be its canonical stack. Then \(\Db(\coh \XX)\) admits a full exceptional collection.

Assume moreover that the singularities of \(X\) are \(p_1,\dots,p_r\), where \(p_i\) has type \(\frac1{k_i}(1,1)\), with \(k_i\geq 2\). Then the length of this full exceptional collection is
\[
12-K_X^2
+
\sum_{i=1}^r
\frac{2(k_i-1)(k_i-2)}{k_i}.
\]
Equivalently, if \(f:\widetilde X\to X\) is the minimal resolution, then
\[
\rk K_0(\widetilde X)
=
12-K_X^2
+
\sum_{i=1}^r
\frac{(k_i-2)^2}{k_i},
\]
and the canonical stack contributes \(k_i-2\) further exceptional objects over the point \(p_i\).
\end{proposition}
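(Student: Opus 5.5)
The existence assertion should follow verbatim from the argument of \cref{thm:general-fec}, with the $\frac13(1,1)$ hypothesis removed. By \cref{prop:log-del-pezzo-rational} the minimal resolution $\widetilde X$ is a smooth rational surface, so \cref{cor:rational-surface-fec} gives a full exceptional collection $\langle G_1,\dots,G_N\rangle$ on $\Db(\coh\widetilde X)$. \cref{thm:ishii-ueda-global} applies to arbitrary quotient singularities and yields
\[
\Db(\coh\XX)=\bigl\langle \mathcal E_1,\dots,\mathcal E_\ell,\Phi(\Db(\coh\widetilde X))\bigr\rangle
\]
with $\mathcal E_1,\dots,\mathcal E_\ell$ exceptional. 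Concatenating gives the full exceptional collection $\langle\mathcal E_1,\dots,\mathcal E_\ell,\Phi(G_1),\dots,\Phi(G_N)\rangle$, whose length is $\ell+N$. The remaining work is to compute $N$ and $\ell$ under the $\frac1{k_i}(1,1)$ hypothesis.

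For $N$, I would repeat the computation of \cref{cor:length}. A $\frac1k(1,1)$ point has Hirzebruch--Jung fraction $\frac k1=[k]$, so its resolution is a single smooth rational curve $C_i$ with $C_i^2=-k_i$. Adjunction $K_{\widetilde X}\cdot C_i=k_i-2$ fixes the discrepancy:
\[
K_{\widetilde X}=f^*K_X-\tfrac{k_i-2}{k_i}C_i \quad\text{near } p_i.
\]
The curves $C_i$ are disjoint and $f^*K_X\cdot C_i=0$, hence
\[
K_{\widetilde X}^2=K_X^2-\sum_i\frac{(k_i-2)^2}{k_i}.
\]
Noether's formula for a rational surface gives $\rho(\widetilde X)=10-K_{\widetilde X}^2$, so $\rk K_0(\widetilde X)=12-K_{\widetilde X}^2$, which is the stated formula.

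For $\ell$, I would generalize \cref{lem:non-special-13}. For $\frac1k(1,1)$ the Ishii--Ueda/Wunram sequence \cite[Theorem~3.2]{Ishii2015} is $i_0=k$, $i_1=1$, $i_2=0$. The special characters are therefore $\rho_0$ and $\rho_1$, and the non-special ones are $\rho_2,\dots,\rho_{k-1}$, which gives $k_i-2$ of them. When $k_i=2$ (an $A_1$ point) there are none, consistent with the crepant case. The complements over distinct points have disjoint supports and so are mutually orthogonal; hence $\ell=\sum_i(k_i-2)$. Adding the two counts gives
\[
\frac{(k-2)^2}{k}+(k-2)=\frac{(k-2)(2k-2)}{k}=\frac{2(k-1)(k-2)}{k}
\]
per point, which is the claimed length.

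The main obstacle is the local claim that the Ishii--Ueda complement over a $\frac1k(1,1)$ point is generated by exactly $k-2$ exceptional objects, one per non-special representation, rather than by a subcategory of the right $K_0$-rank that is not obviously exceptional. I would handle this by citing \cite[Proposition~1.1 and Theorem~1.2]{Ishii2015} together with \cite[Example~2.6]{GugiattiRota2023}. If I wanted to avoid relying on those references for this point, the fallback is a rank check: $K_0([\A^2/\boldsymbol\mu_k])$ near $p$ has rank $k$, of which $\Phi$ accounts for $2$ (from $\OO$ and $\OO_{C}$). The residual-gerbe objects $\OO_0\otimes\rho_j$, $2\le j\le k-1$, should then form an exceptional collection in a suitable order, with the needed Ext vanishing checked from the Koszul resolution on $[\A^2/\boldsymbol\mu_k]$.
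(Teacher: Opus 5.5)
Your proposal is correct and follows the paper's proof essentially step for step. The steps are the same: existence from the Ishii--Ueda decomposition together with rationality of $\widetilde X$; $\rk K_0(\widetilde X)$ from the discrepancy coefficient $-\frac{k_i-2}{k_i}$ on $C_i$ and Noether's formula; and the local count of the $k_i-2$ non-special characters $\rho_2,\dots,\rho_{k_i-1}$, read off the sequence $(k,1,0)$ and backed by the Ishii--Ueda and Gugiatti--Rota identification of the complement.

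Your fallback rank check would not suffice on its own. Matching $K_0$-ranks plus Ext-vanishing does not show that the residual-gerbe objects lie in the complement of $\Phi$ and generate it. Since your primary route relies on the cited identification, as the paper does, this does not affect the argument.
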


\begin{proof}
The existence of a full exceptional collection follows from the Ishii--Ueda decomposition and \cref{cor:rational-surface-fec}, as in \cref{thm:general-fec}. We prove the length formula under the additional assumption on the singularities.

Let \(C_i\subset \widetilde X\) be the exceptional curve over \(p_i\). Since \(p_i\) has type \(\frac1{k_i}(1,1)\), the exceptional locus over \(p_i\) consists of one smooth rational curve with \(C_i^2=-k_i\). The discrepancy formula is
\[
K_{\widetilde X}
=
f^*K_X-\sum_{i=1}^r\frac{k_i-2}{k_i}C_i.
\]
The curves \(C_i\) are disjoint, and \(f^*K_X\cdot C_i=0\) for all \(i\). Hence
\[
K_{\widetilde X}^2
=
K_X^2-\sum_{i=1}^r\frac{(k_i-2)^2}{k_i}.
\]
Since \(\widetilde X\) is a smooth rational surface, we have
\[
\rk K_0(\widetilde X)
=
12-K_{\widetilde X}^2
=
12-K_X^2
+
\sum_{i=1}^r
\frac{(k_i-2)^2}{k_i}.
\]

It remains to count the local objects added by the canonical stack. For the singularity \(\frac1{k_i}(1,1)\), the Hirzebruch--Jung continued fraction is \(\frac{k_i}{1}=[k_i]\). The corresponding Wunram sequence gives the special representations \(\rho_0\) and \(\rho_1\) of \(\boldsymbol{\mu}_{k_i}\). Hence the non-special representations are
\[
\rho_2,\rho_3,\dots,\rho_{k_i-1}.
\]
This local calculation is also recorded in \cite[Example~2.6]{GugiattiRota2023}, where the local exceptional collection is ordered as
\[
\left\langle
\OO_0\otimes\rho_2,\dots,\OO_0\otimes\rho_{k_i-1}
\right\rangle.
\]
In the Ishii--Ueda decomposition this local block precedes \(\Phi(\Db(\coh\widetilde X))\); blocks supported at distinct singular points are mutually orthogonal and may be ordered arbitrarily. Therefore the point \(p_i\) contributes \(k_i-2\) exceptional objects, and the full exceptional collection on \(\Db(\coh \XX)\) has length
\[
12-K_X^2
+
\sum_{i=1}^r
\left(
\frac{(k_i-2)^2}{k_i}+k_i-2
\right)
=
12-K_X^2
+
\sum_{i=1}^r
\frac{2(k_i-1)(k_i-2)}{k_i}.\qedhere
\]
\end{proof}

We now explain how this computation interacts with cascades. Suppose first that \(X\) has one singular point of type \(\frac1k(1,1)\) and belongs to a family described by the Cavey--Prince cascade starting from \(\Pp(1,1,k)\); see \cite{CAVEY2020}. The minimal resolution of \(\Pp(1,1,k)\) is the Hirzebruch surface \(\F_k\), and the exceptional curve over the singular point is the negative section \(C_0\subset \F_k\), with \(C_0^2=-k\).

The cascade consists of blow-ups at smooth points of the singular surface. Since these blow-ups occur away from the singular point, the analytic type \(\frac1k(1,1)\) remains unchanged. Passing to minimal resolutions, the cascade lifts to a sequence of ordinary point blow-ups of \(\F_k\), all away from \(C_0\) and its successive strict transforms. Thus Orlov's blow-up formula gives a full exceptional collection on the resolution. The Ishii--Ueda decomposition places the \(k-2\) residual-gerbe objects corresponding to \(\rho_2,\dots,\rho_{k-1}\), in that order, to the left of the image of this resolution collection.

For several singularities, the initial model must already carry the prescribed basket; in particular, \(\Pp(1,1,k)\) is insufficient because it has only one singular point. For instance, Cavey--Prince construct models with two singularities as hypersurfaces
\[
X_{k_1+k_2}\subset \Pp(1,1,k_1,k_2),
\]
with singularities of types \(\frac1{k_1}(1,1)\) and
\(\frac1{k_2}(1,1)\) \cite[Corollary~7.1]{CAVEY2020}. A cascade from such a
model consists of blow-ups at smooth points, so the basket remains fixed
along the cascade.

Taking \(k_1=3\) and \(k_2=6\) in this construction gives
\[
X_9\subset \Pp(1,1,3,6).
\]
The local contribution is immediate from the formula above: the \(\frac13(1,1)\) point contributes one exceptional object supported on the residual gerbe, while the \(\frac16(1,1)\) point contributes four. Thus these two singularities contribute five local exceptional objects in total.

This gives the guiding principle for explicit constructions beyond \(X_{10}\). One first chooses a model whose basket contains the desired singularities, then analyzes the cascade by blow-ups at smooth points, and finally adds the exceptional objects supported on the residual gerbes over the singular points. A systematic extension would require a precise cascade description and a compatibility check for the chosen initial model.


\appendix
\section{Brauer groups and obstructions in the Corti--Heuberger cascades}
\label{app:brauer}

We retain the notation of \cref{rem:brauer-obstruction}. Thus
\(E_1,\dots,E_k\) are the exceptional \((-3)\)-curves,
\(\Lambda\) is the lattice they generate, and
\(\theta:\Pic(\widetilde X)\to\Lambda^\vee\) is the intersection map. Since the
minimal resolution is rational, that remark identifies
\(\Br(X)\) with \(\operatorname{coker}(\theta)\). We write
\(e_1,\dots,e_k\) for the basis of \(\Lambda^\vee\) dual to the \(E_i\). We also
retain the notation \(Z_k\) of \cref{sec:cascade} for the chosen representative
of the \(qG\)-rigid base of the \(k\)-point cascade. By
\cite[Theorem~6 and Remark~7]{Corti2016}, each \(Z_k\) is the unique
isomorphism class in its base family, and \cite[Corollary~8]{Corti2016}
expresses every member of an index-one family \(X_{k,d}\) in the corresponding
cascade as a sequence of blow-ups of nonsingular points from \(Z_k\). The
auxiliary families in the one- and two-point cascades are treated separately
in \cref{app:brauer-classification}. We determine the cokernel for all six
cascades.

\subsection{Glue in the exceptional lattice}
\label{app:brauer-glue}

Set
\[
\Lambda^{\mathrm{sat}}
=
\bigl(\Lambda\otimes\mathbb Q\bigr)\cap\Pic(\widetilde X).
\]
This is the saturation of \(\Lambda\) in \(\Pic(\widetilde X)\). The quotient
\(\Lambda^{\mathrm{sat}}/\Lambda\) measures the failure of the
exceptional lattice to be primitive; following standard lattice terminology,
we call its elements \emph{glue classes}. The discriminant group
\[
A_\Lambda:=\Lambda^\vee/\Lambda
\]
carries the induced discriminant bilinear form
\[
b_\Lambda:A_\Lambda\times A_\Lambda\longrightarrow\mathbb Q/\Z.
\]
The standard gluing theory for integral lattices identifies integral
overlattices with isotropic subgroups of the discriminant group; see
\cite[Section~1.4]{Nikulin1980}, using the discriminant bilinear form in the
possibly odd case rather than the discriminant quadratic form of an even
lattice. Hence
\(\Lambda^{\mathrm{sat}}/\Lambda\subset A_\Lambda\) is isotropic for
\(b_\Lambda\).

\begin{lemma}\label{lem:brauer-glue-vectors}
Every class in
\(\Lambda^{\mathrm{sat}}/\Lambda\) can be represented in the form
\[
L=\frac13\sum_{i=1}^k a_iE_i,
\qquad
a_i\in\{0,\pm1\}.
\]
For such a representative,
\[
\sum_{i=1}^k a_i^2\equiv0\pmod3,
\qquad
\sum_{i=1}^k a_i\equiv0\pmod3.
\]
Consequently:
\begin{enumerate}[label=\textup{(\roman*)}]
\item if \(k=1\) or \(k=2\), then \(\Lambda\) is primitive;
\item if \(3\leq k\leq5\), every nonzero glue class is represented by
\[
\pm\frac{E_a+E_b+E_c}{3}
\]
for a triple \(\{a,b,c\}\), and the glue subgroup
\(\Lambda^{\mathrm{sat}}/\Lambda\) has order at most \(3\);
\item if \(k=6\), classes supported on three or on six exceptional curves are
numerically possible.
\end{enumerate}
\end{lemma}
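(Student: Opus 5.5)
The plan is to work in the discriminant group. Since the \(E_i\) are pairwise disjoint \((-3)\)-curves, \(\Lambda\cong\langle-3\rangle^{\oplus k}\). Hence \(\Lambda^\vee=\bigoplus_i\Z\,\frac{E_i}{3}\) inside \(\Lambda\otimes\mathbb Q\), and \(A_\Lambda\cong(\Z/3)^k\) with diagonal form \(b_\Lambda(E_i/3,E_j/3)=-\delta_{ij}/3 \bmod \Z\).

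First we show \(\Lambda^{\mathrm{sat}}\subset\Lambda^\vee\). Any \(L\in\Lambda^{\mathrm{sat}}\) is an integral class in \(\Pic(\widetilde X)\), so \(L\cdot E_i\in\Z\) for all \(i\), which is exactly the condition \(L\in\Lambda^\vee\). Writing \(L=\frac13\sum b_iE_i\) with \(b_i\in\Z\), we may subtract elements of \(\Lambda\) to reduce each \(b_i\) modulo \(3\) into \(\{0,\pm1\}\). This gives the representative form.

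The two congruences then come from integrality on the smooth surface.
\begin{itemize}
\item Since \(L\) is integral, \(L^2\in\Z\). We compute \(L^2=-\frac{3}{9}\sum a_i^2=-\frac13\sum a_i^2\), so \(\sum a_i^2\equiv0\pmod 3\). This is the isotropy condition for \(b_\Lambda\) recalled above.
\item For the second congruence we use Riemann--Roch integrality, \(\chi(\OO(L))\in\Z\), equivalently \(L\cdot K_{\widetilde X}\equiv L^2 \pmod 2\). Adjunction gives \(K_{\widetilde X}\cdot E_i=-2-E_i^2=1\), so \(L\cdot K_{\widetilde X}=\frac13\sum a_i\), and this must be an integer. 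Hence \(\sum a_i\equiv 0\pmod 3\).
\end{itemize}
An alternative route for the second congruence is \(K_{\widetilde X}=f^*K_X-\frac13\sum E_i\). The integrality of \(L\cdot K_{\widetilde X}\) alone already suffices.

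The consequences follow by counting. With \(a_i\in\{0,\pm1\}\), \(\sum a_i^2\) is the size \(m\) of the support, so \(3\mid m\).
\begin{itemize}
\item For \(k\le 2\) we get \(m=0\), so the glue is trivial. This is (i).
\item For \(3\le k\le5\) we get \(m=3\). The support is a triple with signs \(a_a,a_b,a_c\in\{\pm1\}\) and sum \(\equiv0\pmod3\); the sums \(\pm1\) are excluded, leaving \(\pm3\), i.e.\ all signs equal. So every nonzero class is \(\pm\frac{E_a+E_b+E_c}{3}\).
\item For (ii) we still need the order bound. Suppose two distinct triples \(T\neq T'\) both occur. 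Add or subtract the corresponding classes (the glue subgroup is closed) and reduce. The result is supported on the symmetric difference together with \(\pm2\equiv\mp1\) coefficients on \(T\cap T'\). Its support size is \(|T\cup T'|\), which lies in \(\{4,5\}\) for \(k\le 5\) and is not divisible by \(3\). This is a contradiction, so at most one triple occurs and the subgroup has order at most \(3\).
\item For \(k=6\), support sizes \(3\) and \(6\) both satisfy \(3\mid m\), and sign patterns with sum \(\equiv0\) exist in each case, e.g.\ all \(+1\) on six curves. This gives (iii).
\end{itemize}

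The main obstacle is the second congruence: it needs a parity/Riemann--Roch integrality argument beyond the discriminant form, so the mod-2 step must be made correct. Since \(L^2\) and \(L\cdot K\) are already integers, \(\chi\) integrality gives only a mod-2 relation. The direct route is therefore that \(L\cdot K_{\widetilde X}=\frac13\sum a_i\) must itself be an integer because \(L\) and \(K_{\widetilde X}\) are integral divisor classes; I would state it that way.

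The remaining subtle step is the symmetric-difference argument in (ii). There one must check that the case \(T\cap T'\) of size \(1\) or \(2\) indeed yields support size \(4\) or \(5\); note that \(|T\cap T'|=0\) is impossible because it would require \(k\ge6\).
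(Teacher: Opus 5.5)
Your proposal is correct and follows the paper's proof essentially step for step. It shows $\Lambda^{\mathrm{sat}}\subset\Lambda^\vee$ with $A_\Lambda\cong(\Z/3)^k$, obtains the first congruence from $L^2\in\Z$ and the second from $K_{\widetilde X}\cdot L=\frac13\sum a_i\in\Z$ via $K_{\widetilde X}\cdot E_i=1$, and then counts supports; as you yourself note, the Riemann--Roch detour is unnecessary. The only variation is in the order bound of (ii): you apply the support-size congruence to $L\pm L'$, whose support is $|T\cup T'|\in\{4,5\}$, whereas the paper uses total isotropy of the glue subgroup, where the pairing is $-|T\cap T'|/3\neq0$ in $\mathbb Q/\Z$. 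These two arguments are equivalent by polarization, and yours has the small advantage of not needing the overlattice correspondence.
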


\begin{proof}
Since every class in \(\Pic(\widetilde X)\) has integral intersection with
each \(E_i\), the saturation \(\Lambda^{\mathrm{sat}}\) is contained in
\(\Lambda^\vee\). The Gram matrix of \(\Lambda\) is \(-3I_k\), so
\(A_\Lambda\cong(\Z/3)^k\), which gives the stated form of \(L\).
Integrality of the intersection form on \(\Pic(\widetilde X)\) gives
\[
L^2=-\frac13\sum_i a_i^2\in\Z,
\]
and therefore \(\sum_i a_i^2\equiv0\pmod3\). Moreover, adjunction gives
\[
K_{\widetilde X}\cdot E_i=1.
\]
Thus
\[
K_{\widetilde X}\cdot L=\frac13\sum_i a_i\in\Z,
\]
which proves the second congruence.

The first congruence says that the number of nonzero coefficients is divisible
by \(3\). This proves (i). If \(3\leq k\leq5\), a nonzero class has support
of size \(3\), and the second congruence forces its three coefficients to
have the same sign. By the cited overlattice correspondence, the glue subgroup
is totally isotropic for \(b_\Lambda\). Two support-three
classes with distinct supports
have pairing
\[
-\frac{\#(A\cap B)}{3}\pmod{\Z}.
\]
For two distinct triples in a set of at most five elements, the intersection
has size \(1\) or \(2\), so the two classes cannot be orthogonal. Hence all
nonzero glue classes generate the same cyclic subgroup, proving (ii). For
\(k=6\), supports of size \(3\) and \(6\) satisfy the first numerical
condition, subject also to the stated congruence on the signs.
\end{proof}

Thus three exceptional curves form the smallest possible support of a nonzero
glue class. For \(k=1\), the primitivity conclusion recovers the computation for
\(\Pp(1,1,3)\) in \cref{rem:brauer-obstruction}. For \(k=2\),
\(\Lambda\) is likewise primitive. The blow-up description used in the proof
of \cref{cor:rational-surface-fec} shows that the intersection lattice
\(\Pic(\widetilde X)\) is unimodular: this holds for \(\Pp^2\) and the
Hirzebruch surfaces, and a point blow-up adds an orthogonal \((-1)\)-class.
Since \(\Lambda\subset\Pic(\widetilde X)\) is primitive, its quotient is
torsion-free. Applying \(\Hom(-,\Z)\) and using the unimodularity
\(\Pic(\widetilde X)\cong\Pic(\widetilde X)^\vee\) shows that the restriction
map \(\theta:\Pic(\widetilde X)\to\Lambda^\vee\) is surjective. Hence
\[
\Br(X)=0.
\]

\subsection{Behavior under cascade blow-ups}
\label{app:brauer-invariance}

The morphisms in a Corti--Heuberger cascade are blow-ups of nonsingular
points by \cite[Corollary~8]{Corti2016}. Their effect on the intersection map
is elementary.

\begin{lemma}\label{lem:brauer-cascade-invariance}
Let \(X'\to X\) be one of the blow-ups at a smooth point occurring in a
Corti--Heuberger cascade. Then
\[
\Br(X')\cong\Br(X).
\]
\end{lemma}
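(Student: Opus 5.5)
We plan to prove the lemma by comparing Bright's exact sequences (\cref{rem:brauer-obstruction}) for \(X\) and \(X'\), which identify \(\Br(X)\cong\operatorname{coker}(\theta)\) and \(\Br(X')\cong\operatorname{coker}(\theta')\), since both minimal resolutions are smooth rational surfaces with trivial Brauer group.

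\textbf{Step 1: lifting the blow-up to resolutions.} Let \(q\in X\) be the smooth centre, \(\widetilde X\to X\) the minimal resolution, and \(\tilde q\in\widetilde X\) the unique preimage of \(q\); it exists because \(\widetilde X\to X\) is an isomorphism over the smooth locus. Let \(\beta:\widetilde X'=\Bl_{\tilde q}\widetilde X\to\widetilde X\), with exceptional curve \(D\). We check that \(\widetilde X'\to X'\) is the minimal resolution of \(X'\):
\begin{itemize}
\item it is a resolution, being an isomorphism over the smooth locus of \(X'\) and agreeing with \(\widetilde X\to X\) near the singular points;
\item it is minimal, since its exceptional curves are the strict transforms \(E_i'\) of the \(E_i\), which are still \((-3)\)-curves because \(\tilde q\notin\bigcup E_i\).
\end{itemize}
Thus \(\Lambda'=\bigoplus\Z E_i'\). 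Identifying \(E_i'\leftrightarrow E_i\) gives an isometric identification \(\Lambda'\cong\Lambda\), hence \(\Lambda'^\vee\cong\Lambda^\vee\).

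\textbf{Step 2: comparing the intersection maps.} We have the orthogonal decomposition
\[
\Pic(\widetilde X')=\beta^*\Pic(\widetilde X)\oplus\Z D .
\]
The key facts are:
\begin{itemize}
\item \(\beta^*M\cdot E_i'=M\cdot E_i\), because \(\beta\) is an isomorphism near \(E_i\);
\item \(D\cdot E_i'=0\), because \(D\) is disjoint from the strict transforms.
\end{itemize}
Hence \(\theta'\) restricted to \(\beta^*\Pic(\widetilde X)\) is \(\theta\) under the identification above, and \(\theta'(D)=0\). So \(\operatorname{im}\theta'=\operatorname{im}\theta\), which gives \(\operatorname{coker}\theta'\cong\operatorname{coker}\theta\).

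\textbf{Step 3: concluding.} Combining Step 2 with Bright's sequence yields \(\Br(X')\cong\Br(X)\).

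The main point requiring care is Step 1: confirming that the centre \(\tilde q\) avoids every \(E_i\), and that no new \((-1)\)- or \((-2)\)-curves enter the exceptional locus of \(\widetilde X'\to X'\). The second claim holds because the exceptional locus is determined by the singular points, which are unchanged. If one instead worked with a cascade step written as a contraction of a floating \((-1)\)-curve, the same argument applies after reversing the direction, since floating curves lie in the smooth locus by \cite[Definition~5]{Corti2016}.
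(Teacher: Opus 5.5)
Your proposal is correct and follows the paper's proof: lift the blow-up to minimal resolutions with center disjoint from the \(E_i\), use \(\Pic(\widetilde X')=\beta^*\Pic(\widetilde X)\oplus\Z D\) with \(D\cdot E_i'=0\) and \(\beta^*M\cdot E_i'=M\cdot E_i\), and conclude that \(\theta'\) and \(\theta\) have the same image. Hence the cokernels, and by Bright's sequence the Brauer groups, are isomorphic. Your additional check that \(\Bl_{\tilde q}\widetilde X\to X'\) is the minimal resolution makes explicit a point the paper takes for granted.
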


\begin{proof}
On minimal resolutions, the induced morphism
\[
\beta:\widetilde X'\longrightarrow\widetilde X
\]
is the blow-up of a point disjoint from \(E_1,\dots,E_k\). The strict
transforms of these curves give natural identifications
\[
\Lambda_{X'}\cong\Lambda_X,
\qquad
\Lambda_{X'}^\vee\cong\Lambda_X^\vee.
\]
If \(F\) is the new exceptional curve, then
\[
\Pic(\widetilde X')
=
\beta^*\Pic(\widetilde X)\oplus\Z F,
\qquad
F\cdot E_i=0.
\]
Therefore
\[
(\beta^*D+nF)\cdot E_i=D\cdot E_i
\]
for every \(i\). Under the displayed identification of the dual exceptional
lattices, the image and cokernel of \(\theta\) are therefore unchanged.
\end{proof}

It is therefore enough to compute the cokernel for one convenient
representative of each cascade.

\subsection{\texorpdfstring{The cases \(k=3,4,5\)}{The cases k=3,4,5}}
\label{app:brauer-unobstructed}

\subsubsection*{The three-point base}
\label{app:brauer-three}

Corti--Heuberger exhibit polygon \(20\) as a toric \(qG\)-degeneration in the
family \(X_{3,5}\); see
\cite[Section~7, Table~4 and Figure~11]{Corti2016}. By the rigidity noted
above, the toric special fiber is isomorphic to the chosen base
\(Z_3=X_{3,5}\). Let \(N\cong\Z^2\), and let
\(v_1,\dots,v_5\in N\) be the primitive ray generators of its fan, in
counterclockwise order, with \(V_i\) the corresponding torus-invariant
divisors:
\[
v_1=(1,0),\quad
v_2=(-1,3),\quad
v_3=(-2,3),\quad
v_4=(-1,0),\quad
v_5=(0,-1).
\]
The first three two-dimensional cones have determinant \(3\), while the
remaining two are smooth. The primitive rays inserted to resolve the three
singular cones are
\[
u_{12}=(0,1),\qquad
u_{23}=(-1,2),\qquad
u_{34}=(-1,1).
\]
Let \(E_1,E_2,E_3\) be the corresponding exceptional torus-invariant
divisors. The subdivision is the minimal toric resolution; see
\cite[Chapters~10--11]{CoxLittleSchenck2011} for resolutions of toric surface
singularities by subdivision into primitive rays. Computing the boundary
self-intersections from the fan relations as in
\cite[Theorem~10.4.4]{CoxLittleSchenck2011} gives
\[
(V_1,E_1,V_2,E_2,V_3,E_3,V_4,V_5)
=
(0,-3,-1,-3,-1,-3,-1,0).
\]
Thus the inserted divisors are precisely the exceptional \((-3)\)-curves.
After the unimodular change of coordinates
\((x,y)\mapsto(x,x+y)\), these are the rays of the polygon cited above.

The boundary intersections give
\[
\theta(V_1)=e_1,\qquad
\theta(V_2)=e_1+e_2,\qquad
\theta(V_4)=e_3.
\]
Consequently,
\[
e_1=\theta(V_1),\qquad
e_2=\theta(V_2)-\theta(V_1),\qquad
e_3=\theta(V_4),
\]
so \(\theta\) is surjective and
\[
\Br(Z_3)=\Br(X_{3,5})=0.
\]
Equivalently, the divisor classes
\[
\OO(-V_1),\qquad
\OO(V_1-V_2),\qquad
\OO(-V_4)
\]
have intersection \(-\delta_{ij}\) with \(E_j\).

For the four- and five-point calculations we use the \(k\)-gon construction.
It starts with a smooth del Pezzo surface of degree \(k\) containing a cyclic
configuration \(C_1+\cdots+C_k\) of \((-1)\)-curves, blows up the vertices
\(C_i\cap C_{i+1}\), and contracts the strict transforms of the \(C_i\).
Corti--Heuberger show that the resulting surface has \(k\) singularities of
type \(\frac13(1,1)\), degree \(k/3\), and belongs to the family
\(X_{k,k/3}\); see \cite[Section~3.5, ``\(k\)-gons'']{Corti2016}. Thus the
models used below are respectively members of \(X_{4,4/3}\) and
\(X_{5,5/3}\). Let \(F_i\) be the exceptional curve over the
\(i\)-th vertex and \(E_i\) the strict transform of \(C_i\), with cyclic
indices. Direct intersection calculations give
\[
\theta(F_i)=e_i+e_{i+1},
\qquad
\theta(E_i)=-3e_i.
\]

\subsubsection*{The five-point base}

For example, on the degree-five del Pezzo surface
\(\Bl_{p_1,\dots,p_4}\Pp^2\), with exceptional classes \(Q_i\), the curves
\[
Q_1,\quad H-Q_1-Q_2,\quad Q_2,\quad H-Q_2-Q_3,\quad H-Q_1-Q_4
\]
form a cyclic pentagon of \((-1)\)-curves. The construction above therefore
gives the rigid base \(Z_5=X_{5,5/3}\). The five vectors
\(e_i+e_{i+1}\) are the columns of
\[
A_5=I+P_5,
\]
where \(P_5\) is the cyclic permutation matrix. A direct calculation gives
\[
\det(A_5)=2.
\]
Thus \(\Z^5/A_5\Z^5\) has order \(2\). Since the image of \(\theta\) also
contains \(3\Z^5\), and multiplication by \(3\) is invertible on this
order-two quotient, we obtain
\[
A_5\Z^5+3\Z^5=\Z^5.
\]
Therefore \(\theta\) is surjective and
\[
\Br(Z_5)=\Br(X_{5,5/3})=0.
\]

\subsubsection*{A four-point representative}

Here it is convenient to use the member \(X_{4,4/3}\), obtained from the
rigid base \(Z_4=X_{4,7/3}\) by one blow-up at a nonsingular point
\cite[Corollary~8]{Corti2016}. By \cref{lem:brauer-cascade-invariance},
it has the same cokernel as \(Z_4\). The vectors \(e_i+e_{i+1}\) span a
rank-three subgroup, so one additional divisor class is needed. To avoid
confusing the exceptional classes on a degree-four del Pezzo surface with the
\((-3)\)-curves above, write
\[
S=\Bl_{p_1,\dots,p_5}\Pp^2
\]
with basis \(H,Q_1,\dots,Q_5\), where \(H^2=1\), \(Q_i^2=-1\), and all
other pairings between basis elements vanish. Consider
\[
\begin{aligned}
C_1&=Q_1,\\
C_2&=H-Q_1-Q_2,\\
C_3&=Q_2,\\
C_4&=2H-Q_1-Q_2-Q_3-Q_4-Q_5.
\end{aligned}
\]
Their self-intersections are all \(-1\). The adjacent intersections are
\[
C_1\cdot C_2=C_2\cdot C_3=C_3\cdot C_4=C_4\cdot C_1=1,
\]
while
\[
C_1\cdot C_3=0,
\qquad
C_2\cdot C_4
=
2-1-1
=0.
\]
Thus their intersection graph is a \(4\)-cycle. Moreover,
\[
Q_3\cdot C_1=Q_3\cdot C_2=Q_3\cdot C_3=0,
\qquad
Q_3\cdot C_4=1.
\]
For general \(p_1,\dots,p_5\), these intersections are transverse; in
particular, \(Q_3\) avoids the four vertices of the cycle. If \(\pi\) is their
blow-up, the projection formula gives
\[
\theta(\pi^*Q_3)=e_4.
\]
It follows successively that
\[
\begin{aligned}
e_1&=\theta(F_4)-e_4,\\
e_2&=\theta(F_1)-e_1,\\
e_3&=\theta(F_2)-e_2.
\end{aligned}
\]
Hence \(\theta\) is surjective and the \(4\)-gon representative
\(X_{4,4/3}\) has trivial Brauer group:
\[
\Br(X_{4,4/3})=0.
\]

\subsection{The six-point obstruction}
\label{app:brauer-six}

For the rigid base \(Z_6=X_{6,2}\) of \cref{sec:cascade},
Corti--Heuberger give the corresponding toric model in
\cite[Section~2.2.5]{Corti2016}. Let \(N\cong\Z^2\), and let
\(v_1,\dots,v_6\in N\) be the primitive ray generators of its fan, with
\(V_i\) the corresponding torus-invariant divisors:
\[
\begin{aligned}
v_1&=(1,0),&
v_2&=(-1,3),&
v_3&=(-2,3),\\
v_4&=(-1,0),&
v_5&=(1,-3),&
v_6&=(2,-3),
\end{aligned}
\]
in counterclockwise order. Indices are cyclic modulo \(6\) throughout. Every
adjacent cone has determinant \(3\), and hence defines a cyclic quotient
singularity of order \(3\). The primitive rays inserted in the
minimal toric resolution are
\[
u_i=\frac{v_i+v_{i+1}}3,
\qquad
i\in\Z/6\Z.
\]
Explicitly,
\[
\begin{aligned}
u_1&=(0,1),&
u_2&=(-1,2),&
u_3&=(-1,1),\\
u_4&=(0,-1),&
u_5&=(1,-2),&
u_6&=(1,-1).
\end{aligned}
\]
Let \(E_i\) be the exceptional torus-invariant divisor corresponding to
\(u_i\). As in the three-point case, this subdivision gives the minimal toric
resolution; see \cite[Chapters~10--11]{CoxLittleSchenck2011}. The resolved
boundary is the alternating cycle
\[
V_1,E_1,V_2,E_2,\dots,V_6,E_6,
\]
and, by \cite[Theorem~10.4.4]{CoxLittleSchenck2011}, the fan relations give
\[
V_i^2=-1,\qquad E_i^2=-3.
\]
Thus the minimal resolution of each singularity consists of one
\((-3)\)-curve, so all six singularities are of type \(\frac13(1,1)\).

We now exhibit the divisibility responsible for the obstruction. The
principal-divisor formula
\(\operatorname{div}(\chi^m)=\sum_\rho\langle m,v_\rho\rangle D_\rho\)
of \cite[Theorem~4.1.3]{CoxLittleSchenck2011}, applied to the character
\(m=(0,1)\), gives
\[
\begin{aligned}
0={}&E_1+3V_2+2E_2+3V_3+E_3-E_4\\
&\hspace{3em}-3V_5-2E_5-3V_6-E_6.
\end{aligned}
\]
Rearranging gives
\[
E_1-E_2+E_3-E_4+E_5-E_6=3L,
\]
where
\[
L=-V_2-V_3+V_5+V_6-E_2+E_5
\in\Pic(\widetilde Z_6).
\]
Thus the lattice generated by \(E_1,\dots,E_6\) is not primitive.

For a complete toric variety, the torus-invariant prime divisors generate
\(\Cl(\widetilde Z_6)\), and smoothness gives
\(\Cl(\widetilde Z_6)=\Pic(\widetilde Z_6)\); see
\cite[Theorem~4.1.3]{CoxLittleSchenck2011}. Hence the toric boundary generates
\(\Pic(\widetilde Z_6)\), and its image under \(\theta\) is generated by
\[
\theta(E_i)=-3e_i,
\qquad
\theta(V_j)=e_{j-1}+e_j,
\]
with cyclic indices. The matrix with columns \(e_{j-1}+e_j\) has Smith normal
form
\[
\operatorname{diag}(1,1,1,1,1,0).
\]
Thus its image has rank five and is primitive. Each generator lies in the
kernel of the primitive alternating-sum homomorphism
\[
\Z^6\longrightarrow\Z,
\qquad
(a_1,\dots,a_6)\longmapsto
a_1-a_2+a_3-a_4+a_5-a_6.
\]
Since this kernel also has rank five, the two lattices coincide. Adding the
generators \(-3e_i\) gives
\[
\operatorname{im}(\theta)
=
\left\{
(a_1,\dots,a_6)\in\Z^6:
a_1-a_2+a_3-a_4+a_5-a_6\equiv0\pmod3
\right\}.
\]
Consequently,
\[
\operatorname{coker}(\theta)\cong\Z/3,
\qquad
\Br(Z_6)=\Br(X_{6,2})\cong\Z/3.
\]
In particular, there cannot exist divisor classes \(L_i\) satisfying
\[
L_i\cdot E_j=-\delta_{ij},
\]
since such classes would make \(\theta\) surjective.

By \cite[Corollary~4.8]{KKS2020}, a semiorthogonal decomposition of the
resolution whose components are untwisted adherent to the connected
components of the exceptional divisor would force \(X_{6,2}\) to be
torsion-free in the sense of \cite[Definition~4.7]{KKS2020}, equivalently
\(\Br(X_{6,2})=0\). The nonzero class computed above therefore prevents such
an untwisted adherent decomposition. In the expected untwisted construction,
each local algebra would be \(K(3,1)\). This does not rule out unrelated
semiorthogonal decompositions or the twisted adherent constructions of KKS.

\subsection{\texorpdfstring{Proof of \cref{thm:brauer-cascades-intro}}{Proof of Theorem 1.3}}
\label{app:brauer-classification}

The case \(k=1\) is the computation of
\(\Br(\Pp(1,1,3))\) in \cref{rem:brauer-obstruction}, and the case
\(k=2\) follows from \cref{lem:brauer-glue-vectors}. For \(k=3\) and \(k=5\),
\cref{app:brauer-unobstructed} computes directly on the rigid bases
\(Z_3=X_{3,5}\) and \(Z_5=X_{5,5/3}\). For \(k=4\), it computes on the
\(4\)-gon representative \(X_{4,4/3}\), which belongs to the four-point
cascade and is obtained from \(Z_4=X_{4,7/3}\) by one smooth blow-up.
These three intersection maps are surjective, while
\cref{app:brauer-six} gives cokernel \(\Z/3\) on the rigid base
\(Z_6=X_{6,2}\). The uniqueness of the rigid bases noted at the beginning of
the appendix, together with \cref{lem:brauer-cascade-invariance} and
\cite[Corollary~8]{Corti2016}, propagates the computations to every member of
the index-one families in the corresponding cascades.

It remains to cover the two auxiliary families. Let
\(B\in B_{1,16/3}\), and choose a smooth-point blow-up \(X'\to B\) as in
\cite[Remark~9(1)]{Corti2016}. The same remark gives an alternative sequence
of four contractions of floating \((-1)\)-curves from \(X'\) to
\(\Pp(1,1,3)\). Applying \cref{lem:brauer-cascade-invariance} along these
routes gives
\[
\Br(B)\cong\Br(X')\cong\Br(\Pp(1,1,3))=0.
\]
Similarly, let \(B\in B_{2,8/3}\). By
\cite[Remark~9(2)]{Corti2016}, a suitable smooth-point blow-up \(X'\to B\)
admits an alternative contraction to \(X_{2,17/3}\). Hence
\[
\Br(B)\cong\Br(X')\cong\Br(X_{2,17/3})=0.
\]
Thus all members of all six cascades are covered, proving
\cref{thm:brauer-cascades-intro}. \qed


\section*{Acknowledgements}
I am grateful to my advisor, Sergey Galkin, for his guidance, support, and many valuable discussions. I thank Nicolau Saldanha and Carlos Tomei for supporting my travel to the RGAS School 2026, where part of this work was further developed. I also thank Alexander Kuznetsov for a helpful conversation during the school and for pointing me to the work of Karmazyn--Kuznetsov--Shinder, which motivated the comparison with the singular surface in Section~4. I am grateful to Alexey Elagin for useful discussions at IMPA about derived categories of singular varieties and for emphasizing the distinction between the categorical behavior of singular surfaces and that of their smooth canonical stacks. Finally, I thank Alessio Corti for helpful correspondence on the Corti--Heuberger cascades, and in particular for pointing out the relevance of the Reid--Suzuki construction to \(X_{10}\).

\section*{Funding}
The author was supported by the Funda\c{c}\~ao Carlos Chagas Filho de Amparo \`a Pesquisa do Estado do Rio de Janeiro (FAPERJ) through the Nota 10 Excellence Scholarship.

\section*{Declaration of generative AI and AI-assisted technologies in the writing process}
During the preparation of this work, the author used ChatGPT (OpenAI) to assist with language editing, organization, and manuscript formatting. After using this tool, the author reviewed and edited the content as needed and takes full responsibility for the content of the publication.

\printbibliography

\end{document}